\def\wt#1{\widetilde{#1}}
\newcommand{\F}{\mathcal{F}}
\newcommand{\dist}{\operatorname{dist}}
\newcommand{\N}{\mathbb{N}}
\newcommand{\R}{\mathbb{R}}
\newcommand{\Z}{\mathbb{Z}}
\newcommand{\e}{\varepsilon}
\newcommand{\om}{\omega}
\newcommand{\ta}{\tau}
\newcommand{\x}{\xi}
\newcommand{\p}{\partial}
\newcommand{\rank}{\operatorname{rank}}
\newcommand{\supp}{\operatorname{supp}}
\newcommand{\ti}{\widetilde}
\newcommand{\Del}[1]{}
\newcommand{\bZ}{\mathbf Z}
\numberwithin{equation}{section}
\newtheorem{thm}{Theorem}[section]
\newtheorem{lem}[thm]{Lemma}
\newtheorem{prop}[thm]{Proposition}
\theoremstyle{remark}
\newtheorem{rem}{Remark}
\newcommand{\pqpair}{(\frac1p, \frac1q)} 
\newcommand{\hpqnorm}{\|H_n^d\|_{p,q}}
\date{\today}
\renewcommand{\sl}[1]{ \|_{L^{#1}(\mathbb S^d)}}
\begin{document}
\title[Spherical harmonic projection]{Sharp $\mathlarger{\mathlarger{{ L^p}}}$--$\mathlarger{\mathlarger{{ L^q}}}$ estimates  for 
\\ the spherical harmonic projection}

\author[Y. Kwon]{Yehyun Kwon}
\author[S. Lee]{Sanghyuk Lee}
\address{Department of Mathematical Sciences, Seoul National University, Seoul 151-747, Republic of Korea}
\email{kwonyh27@snu.ac.kr} 
\email{shklee@snu.ac.kr}

\subjclass[2010]{35B45, 42B15} \keywords{Spherical harmonics, spectral projection, Carleman estimate}

\begin{abstract}  
We consider $L^p$--$L^q$  estimates for the spherical harmonic projection operators and obtain sharp bounds  on a certain range of $p$, $q$. As an application, we provide a proof of off-diagonal Carleman estimates for the Laplacian, which extends the earlier results due to Jerison and Kenig \cite{JK}, and Stein \cite{St-append}. 
\end{abstract}
\maketitle

\section{Introduction}

\subsubsection*{Spherical harmonics and spectral projection} Let $\mathbb S^{d}$ be the $d$-dimensional unit sphere contained in $\mathbb R^{d+1}$ and  $\mathcal H_n^d$ be the space  of spherical harmonic polynomials of degree $n$ defined on $\mathbb S^d$. It is well known that  $L^2(\mathbb S^{d})=\bigoplus_{n=0}^\infty \mathcal H_n^d$, and $\mathcal H_n^d$ is of dimension $\sim n^{d-1}$ and the eigenspace of the Laplace-Beltrami operator $\Delta_{\mathbb S^d}$ with the eigenvalue $-n(n+d-1)$. See \cite{SW} for further details.   Let us denote by $H_n^d$  the projection operator to  $\mathcal H_n^d$.  Then,  for any $f\in L^2(\mathbb S^{d})$, we may write  
\[ f= \sum_{n=0}^\infty  H_n^df .\] 

The optimal $L^p$--$L^q$ bound for  $H_n^d$ in terms of $n$ has drawn  interest being  related to applications to various problems, for example, convergence of Riesz means on the sphere \cite{So}, Carleman estimates  in connection with  unique continuation problems (\cite{So-spectral, So-strong, Je, KT}). Even though  the bounds for  $H_n^d$ have a wide range of (and frequent) applications, as far as the authors are aware, it seems that the optimal $L^p$--$L^q$ bound for  $H_n^d$ has not been considered for general $p, q$.  In this paper we  attempt to obtain a complete characterization of $L^p$--$L^q$ estimates for the spherical harmonic projection for a certain range of $p,q$, and make clear the connection between these bounds and estimates for the Carleson-Sj\"olin type oscillatory integral operator. 

Let $p,q\in [1,\infty]$ and $p\le q$. We define  
\[ \|H_n^d\|_{p,q}:= \sup_{\|f\|_{L^p(\mathbb S^{d})}\le 1 }\|H_n^df\|_{ L^q(\mathbb S^{d})}.\] 
There are easy bounds which are basically consequences of  Parseval's identity.  Trivially,  $\|H_n^d\|_{2,2}=1$. This combined with the Cauchy-Schwarz inequality yields the bound $\|H_n^d\|_{2, \infty} \lesssim n^{\frac {d-1}2}$ since $\dim(\mathcal H_n^d)\sim n^{d-1}$.  Duality argument gives $\|H_n^d\|_{1, \infty}\lesssim n^{ {d-1}}$.  Moreover, it turns out that all those bounds are optimal. (For the meaning of the standard notations $\lesssim$ and $\sim$, see the end of this section.) However, for the other $p,q$,  it is no longer  trivial to obtain the optimal bound. As was observed in \cite{So},  for general $p,q$,    the problem of proving the optimal bound for $\|H_n^d\|_{p,q}$ is closely tied to  the optimal decay estimates for the Carleson-Sj\"olin type oscillatory integral operators \cite{St-beijing, Ho, Bo-osc, BG, Le1},  which are again related to the outstanding conjectures in harmonic analysis such as the Bochner-Riesz conjecture and the Fourier restriction conjecture on the sphere (\cite{Tao-br, TVV, TV, Tao-progress, Tao-bil, Le}). For  most recent developments  see  Bourgain and Guth \cite{BG} and  Guth, Hickman and Iliopoulou \cite{GHI}. These results are respectively  based on multilinear estimates due to Bennett, Carbery and Tao \cite{BCT} and the method of polynomial partitioning due to Guth \cite{G1, G2}. 

Motivated by Stanton and Weinstein \cite{Stanton}, Sogge \cite{So} obtained optimal bounds for $\|H_n^d\|_{p,q}$ with $p \le 2$, $q= 2$  for any dimensions $d\ge 2$.  Especially,  when $d=2$,  thanks to the well established $2$--dimensional  oscillatory integral estimates for Carleson-Sj\"olin type operators (\cite{CS, Ho}) he also obtained  bounds for $p,q$ in  a wider range.  See \cite{So} for details. Also, Sogge extended his result to the spectral projection operator on  compact manifold \cite{So-spectral}. There are also results concerning  more specialized bounds such as $\sup_{\|f\|_{L^p(\mathbb S^{d})}=1, f\in \mathcal H_n^d }\|H_n^df\|_{ L^q(\mathbb S^{d})}.$ See, for example, Dai, Feng and Tikhonov \cite{DFT},  De Carli and Grafakos \cite{DG}.

\begin{figure}\label{fig}
\includegraphics[width=210pt]{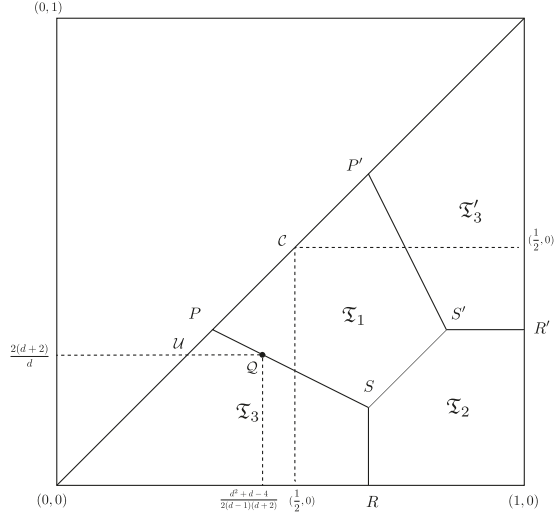}
\caption{The points $P, P', R, R'$ and $S, S'$}
\end{figure}

In order to state  our result  we need to introduce some notations. Let $I$ be the interval $[0,1]$, and define points $P=P(d)$, $R=R(d)$, $S=S(d)\in I^2$ by setting 
\[ P= \Big(\frac{d-1}{2d},\, \frac{d-1}{2d}\Big), \quad  R=\Big(\frac{d+1}{2d},\, 0\Big), \quad S=\Big(\frac{d+1}{2d},  \frac{(d-1)^2}{2d(d+1)}\Big), \]
and we also define $P'$, $R'$, $S'$ by $(x,y)'=(1-y, 1-x)$.  Let $\mathfrak T_1=\mathfrak T_1(d)$, $\mathfrak T_2=\mathfrak T_2(d)\subset I^2$ be given by 
\begin{align*}
\mathfrak T_1&=\Big\{ (x,y) \in I^2 :  y\le x,\,\,  y\ge  \frac{d-1}{d+1}(1- x),\,\,  \frac{d+1}{d-1} (1-x)\ge y,\,\,   x-y < \frac{2}{d+1}   \Big\},\\
\mathfrak T_2&=\Big\{ (x,y) \in I^2  :  x\ge \frac{d+1}{2d},\,\,     x-y \ge\frac{2}{d+1} ,\,\,  y\le \frac{d-1}{2d}  \Big\}. 
\end{align*}
We also define $\mathfrak T_3=\mathfrak T_3(d),$  and  $\mathfrak T_3'=\mathfrak T_3'(d) \subset I^2$ by setting 
\begin{align*}
\mathfrak T_3&=\Big\{ (x,y) \in I^2  :  y\le x,\,\,  y< \frac{d-1}{d+1}(1- x),\,\,  x<\frac{d+1}{2d} \Big\},\\
\mathfrak T_3'&=\Big\{ (x,y) \in I^2 :  (1-y, 1-x)\in \mathfrak T_3\Big\}. 
\end{align*} 
Note that   $\mathfrak T_1$ is the closed trapezoid with vertices  $P$, $S$, $S'$, and $P'$ from which the closed line segment $[S,S']$ is removed,    and $\mathfrak T_2$ is the closed pentagon with vertices  $R$,  $S$, $S'$, $R'$, and $(1,0)$. We also note that  the sets  $\mathfrak T_1,$ $\mathfrak T_2,$ $\mathfrak T_3,$ and $\mathfrak T_3'$ are mutually disjoint and  $(\cup_{i=1}^3  \mathfrak T_i)  \cup\, \mathfrak T_3'=\{ (x,y)\in I^2:  y\le x\}$. See Figure \ref{fig}. 

Let us set 
\[ \gamma=\gamma(p,q):=\max\Big\{  \frac{d-1}2\Big(\frac1p-\frac1q\Big),\,\,   d\Big(\frac1p-\frac1q\Big) -1,\,\,  \frac{d-1}2 -\frac dq,\,\, -\frac{d+1}2+ \frac dp\,\Big\}.\] 

\begin{thm} \label{nec}   Let $1\le p\le q\le \infty$. Then 
\begin{equation} 
\label{lower} \|H_n^d\|_{p,q}\gtrsim  n^{\gamma(p,q)}.
\end{equation} 
Futhermore, if $\pqpair\in [S,R]\cup  [S',R']$, 
\begin{equation}\label{intervalrs}
\sup_{n}\,  n^{-\gamma(p,q)}\|H_n^d\|_{p, q}  =\infty.
\end{equation}
\end{thm}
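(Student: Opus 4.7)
\emph{Proof plan.} The plan is to test $H_n^d$ against three families of functions which produce, respectively, the four exponents appearing in the definition of $\gamma(p,q)$, and then to isolate a logarithmic factor at the critical exponent $q=2d/(d-1)$ for the second assertion. For the first term $\frac{d-1}{2}(\frac{1}{p}-\frac{1}{q})$, I will use the highest-weight harmonic $\phi_n(x)=(x_1+ix_2)^n$, which lies in $\mathcal H_n^d$ so that $H_n^d\phi_n=\phi_n$. In join coordinates $x_1+ix_2=\cos\psi\,e^{i\varphi}$ on $\mathbb S^d$ (with surface measure $\cos\psi(\sin\psi)^{d-2}d\psi\,d\varphi\,d\omega$), $|\phi_n|^p=(\cos\psi)^{np}$ is concentrated on a tube of width $\sim n^{-1/2}$ around the great circle $\{\psi=0\}$, and a beta-function computation gives $\|\phi_n\|_p\sim n^{-(d-1)/(2p)}$; taking the ratio produces the bound $n^{(d-1)(1/p-1/q)/2}$.

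For $\frac{d-1}{2}-\frac{d}{q}$, I will use the reproducing kernel $K_n(\cdot,e)$ of $\mathcal H_n^d$, which is proportional to the Gegenbauer polynomial $C_n^{(d-1)/2}(\cos\theta)$, $\theta=\angle(\cdot,e)$. Darboux's asymptotics give $|K_n(x,e)|\sim n^{d-1}$ for $\theta\lesssim 1/n$ and $|K_n(x,e)|\sim n^{(d-1)/2}\theta^{-(d-1)/2}$ otherwise; direct integration then yields $\|K_n(\cdot,e)\|_p\sim n^{(d-1)/2}$ for $p\le 2d/(d-1)$ and $\sim n^{d-1-d/p}$ for $p\ge 2d/(d-1)$, with a $(\log n)^{(d-1)/(2d)}$ factor at the critical endpoint. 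Since $H_n^d K_n(\cdot,e)=K_n(\cdot,e)$, the choice $f=K_n(\cdot,e)$ with $p\le 2d/(d-1)\le q$ supplies the bound $n^{(d-1)/2-d/q}$.

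For the remaining two exponents $d(\frac{1}{p}-\frac{1}{q})-1$ and $\frac{d}{p}-\frac{d+1}{2}$, I will use the normalized bump $f_\rho=|B_\rho|^{-1/p}\mathbf 1_{B_\rho}$ with $B_\rho=B(e,\rho)$ and $\rho=c/n$ for a small absolute constant $c$. Bernstein's inequality on $\mathbb S^d$ gives $K_n(x,y)=K_n(x,e)(1+O(c))$ for $y\in B_\rho$, whence $H_n^d f_\rho(x)=|B_\rho|^{-1/p}\int_{B_\rho}K_n(x,y)\,d\sigma(y)\asymp n^{d/p-d}K_n(x,e)$; taking $L^q$-norms and combining with the estimates of $\|K_n(\cdot,e)\|_q$ above yields $n^{d/p-d/q-1}$ when $q\ge 2d/(d-1)$ and $n^{d/p-(d+1)/2}$ when $q\le 2d/(d-1)$. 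Taking the maximum of the three families completes the proof of \eqref{lower}.

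For \eqref{intervalrs}, on $[S',R']$ we have $q=2d/(d-1)$, which is exactly the critical exponent at which the bulk integral $\int_{1/n}^1\theta^{d-1-q(d-1)/2}\,d\theta$ becomes $\log n$, so $\|K_n(\cdot,e)\|_{2d/(d-1)}\sim n^{(d-1)/2}(\log n)^{(d-1)/(2d)}$. Substituting into the bump test yields $\|H_n^d\|_{p,2d/(d-1)}\gtrsim n^{d/p-(d+1)/2}(\log n)^{(d-1)/(2d)}$; a direct computation shows that $\gamma(p,2d/(d-1))=d/p-(d+1)/2$ throughout $[S',R']$, so the supremum in \eqref{intervalrs} diverges. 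The case $[S,R]$ follows from the self-adjointness $\|H_n^d\|_{p,q}=\|H_n^d\|_{q',p'}$ together with the symmetry $\gamma(p,q)=\gamma(q',p')$. The main technical point will be justifying the approximation $H_n^d f_\rho\asymp |B_\rho|^{1-1/p}K_n(\cdot,e)$ with a definite sign (no cancellation); choosing $c$ sufficiently small so that the $O(c)$ Bernstein perturbation of $K_n$ is dominated by the main term will do the job.
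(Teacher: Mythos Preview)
Your test functions for the exponent $\frac{d-1}{2}(\frac1p-\frac1q)$ (the highest weight harmonic) and for $d(\frac1p-\frac1q)-1$ (the bump $f_\rho$) match the paper's, but there is a gap in the proof of \eqref{lower}. The zonal test $f=K_n(\cdot,e)$ gives the exponent $\frac{d-1}{2}-\frac d q$ only on the range $p\le\frac{2d}{d-1}\le q$, whereas this exponent is the maximum throughout $\mathfrak T_3$, which contains points with $p>\frac{2d}{d-1}$ (for instance $(\frac1p,\frac1q)=(0,0)$). Neither your beam nor your bump reaches $\frac{d-1}{2}-\frac d q$ there, so as written \eqref{lower} is not fully established. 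The fix is easy and uses the duality you already exploit for \eqref{intervalrs}: your bump test yields the exponent $\frac d p-\frac{d+1}{2}$ for $q\le\frac{2d}{d-1}$, and $\|H_n^d\|_{p,q}=\|H_n^d\|_{q',p'}$ carries this to $\frac{d-1}{2}-\frac d q$ on all of $\mathfrak T_3$. The paper takes a different route: it constructs $f=\mathbf 1_\Sigma$ with $\Sigma$ a union of $\sim N$ concentric annuli of width $\sim N^{-1}$ placed so that the oscillation of $\mathbf Z_n(\zeta\cdot\xi)$ keeps a fixed sign for $\zeta$ near the pole, which yields $\frac{d-1}{2}-\frac d q$ directly for every $p$.

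Your bump test also needs a repair. Bernstein's inequality only gives $|K_n(x,y)-K_n(x,e)|\lesssim c\,n^{d-1}$, which swamps $|K_n(x,e)|\sim n^{(d-1)/2}$ at distances $\theta\sim 1$ from $e$; the multiplicative claim $K_n(x,y)=K_n(x,e)(1+O(c))$ cannot hold in any case because the zonal has zeros. What you actually need is the additive bound $K_n(x,y)=K_n(x,e)+O\bigl(c\,n^{(d-1)/2}\theta^{-(d-1)/2}\bigr)$ for $\theta\gtrsim 1/n$, which follows from the Darboux asymptotics you already quote; then the triangle inequality in $L^q$ gives $\|H_n^df_\rho\|_q\ge(1-Cc)\,|B_\rho|^{1-1/p}\|K_n(\cdot,e)\|_q$, and your computations go through.

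With that correction, your argument for \eqref{intervalrs}---extracting the $(\log n)^{(d-1)/(2d)}$ factor from $\|K_n(\cdot,e)\|_{2d/(d-1)}$ via the bump test---is valid and genuinely different from the paper's. The paper instead passes to a scaling limit (stereographic projection plus the Mehler--Heine formula) to show that a uniform bound $\|H_n^d\|_{p,q}\lesssim n^{d(1/p-1/q)-1}$ would force the Euclidean restriction-extension estimate for $\mathbb S^{d-1}\subset\mathbb R^d$ (equivalently, $L^p$--$L^q$ boundedness of the Bochner--Riesz operator of order $-1$), and then invokes the known failure of that estimate on $[S,R]\cup[S',R']$. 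Your route is more elementary and gives an explicit divergence rate; the paper's route ties the sharpness to a well-understood Euclidean obstruction.
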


In view of Fefferman's disproof of the disk multiplier conjecture \cite{Fe}, the bound  for $(1/p, 1/q)\in [P, P']$ is not likely to be uniformly bounded except the case $p=q=2$,  and it seems possible that the lower bounds can be improved by making use of  a type of Besicovitch set. It is convenient to notice that 
\begin{equation}\label{compare}
  \gamma(p,q)= 
  \begin{cases}   
   {\frac{d-1}2(\frac1p-\frac1q)} & \text{ if }\,  (\frac1p,\frac1q)  \in \mathfrak  T_1  \,, 
   \\ 
  {d(\frac1p-\frac1q) -1} &\text{ if }\,  (\frac1p,\frac1q) \in \mathfrak  T_2    \,, 
   \\ 
  {\frac{d-1}2 -\frac dq}  &\text{ if }\,  (\frac1p,\frac1q)  \in \mathfrak  T_3   \,, 
   \\
   {-\frac{d+1}2+ \frac dp} &\text{ if }\,  (\frac1p,\frac1q)  \in \mathfrak  T_3'   \,. 
\end{cases}
\end{equation}

In the following theorem we show that, for $p\le q$, these  lower bounds  are also the upper bounds on a certain range of $p,q$, and hence prove the optimal bounds.   Combined with Theorem \ref{nec},  the following provide a complete characterization of the bounds for $\|H_n^d\|_{p,q}$.  We set 
\[  \mathcal Q=\Big(\frac{d^2+d-4}{2(d-1)(d+2)}, \frac{d}{2(d+2)}\Big), \quad  \mathcal U=\Big( \frac{d}{2(d+2)}, \frac{d}{2(d+2)}\Big),  
\quad  \mathcal C=\Big(\frac12, \frac12\Big),\] 
and define $\mathcal Q'$ and $\mathcal U'$ as before.  For given vertices $A, B, C\in I^2$ let us denote by $[A,B, C]$  the convex hulls of vertices ${A, B}, C$. 

\begin{thm}\label{k-proj}   
Let $n\ge 1$ and  $p,q$  satisfy that $1\le p \le q\le \infty$ and {$(\frac1p,\frac1q)\not\in  \big([\mathcal Q, \mathcal U, \mathcal C]\cup [\mathcal Q', \mathcal U', \mathcal C] \big)\setminus\mathcal C$ }. Then, if  $\pqpair\not\in [S,R]\cup  [S',R']$, then
\begin{equation}\label{upper} 
\|H_n^d\|_{p,q}\lesssim  n^{\gamma(p,q)}.
\end{equation} 
Futhermore, if $(\frac1p,\frac1q)=S$ or $S'$,  $\| H_n^d f \|_{L^{q,\infty}(\mathbb S^d)}\lesssim n^{1-\frac 2{d+1}} \|f\|_{L^{p,1}(\mathbb S^d)}$,  and  if $(\frac1p,\frac1q)\in (S',R']$,  $\| H_n^d f \|_{L^{q,\infty}(\mathbb S^d)}\lesssim  n^{-\frac{d+1}2+ \frac dp} \|f\|_{L^p(\mathbb S^d)}.$  Here $L^{r,s}(\mathbb S^d)$ denotes the Lorentz space. 
\end{thm}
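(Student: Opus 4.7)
The plan is to realize $H_n^d$, via the Hilb-type asymptotic for the Gegenbauer kernel $C_n^{(d-1)/2}(x\cdot y)$, as an oscillatory integral operator of Carleson--Sj\"olin type with phase $n\,\arccos(x\cdot y)$. Writing $\theta=\arccos(x\cdot y)$, for $\theta$ bounded away from $0,\pi$ the kernel $K_n$ of $H_n^d$ satisfies
\[
K_n(x,y)= n^{\frac{d-1}{2}}(\sin\theta)^{-\frac{d-1}{2}}\bigl(a^+(\theta)e^{in\theta}+a^-(\theta)e^{-in\theta}\bigr)+O\bigl(n^{\frac{d-3}{2}}\bigr).
\]
A dyadic decomposition in $\theta$ and in $\pi-\theta$, plus removal of the lower-order error (which is handled by the trivial bounds), reduces the problem to sharp $L^p$--$L^q$ estimates for Carleson--Sj\"olin oscillatory integral operators of frequency $\sim n$ on $\mathbb S^d$, whose phase has a nondegenerate mixed Hessian of maximal rank.

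The next step is to establish sharp bounds at a collection of vertex exponents and interpolate. For the trapezoidal region $\mathfrak T_1$, the decisive input is the sharp Carleson--Sj\"olin estimate along the edge $[P,S]$, which is furnished by the best currently known oscillatory integral theorems of Bourgain--Guth \cite{BG} and Guth--Hickman--Iliopoulou \cite{GHI}; the excluded region $[\mathcal Q,\mathcal U,\mathcal C]\setminus\mathcal C$ (and its dual) is precisely the gap in those results, inherited from the current state of the Bochner--Riesz and Fourier restriction conjectures. Complex interpolation with the trivial endpoints $\|H_n^d\|_{2,2}=1$ and $\|H_n^d\|_{1,\infty}\lesssim n^{d-1}$ then fills $\mathfrak T_1$ outside the excluded region. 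The regions $\mathfrak T_3$ and $\mathfrak T_3'$, dual to each other because $(H_n^d)^\ast=H_n^d$, are handled via Sogge's sharp $L^2\to L^q$ bound $\|H_n^df\|_{L^q(\mathbb S^d)}\lesssim n^{(d-1)/2-d/q}\|f\|_{L^2(\mathbb S^d)}$ for $q\geq 2(d+1)/(d-1)$; interpolation with $L^2\to L^2$ then covers all of $\mathfrak T_3\cup\mathfrak T_3'$.

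The pentagon $\mathfrak T_2$, where $\gamma=d(1/p-1/q)-1$ is the Hardy--Littlewood--Sobolev scaling exponent, is addressed by splitting the kernel into the cap $\theta\lesssim n^{-1}$ around the diagonal---on which $\|K_n\|_{L^\infty}\lesssim n^{(d-1)/2}$ is supported on a set of measure $\sim n^{-d}$, so that Young's inequality yields exactly the rate $n^{d(1/p-1/q)-1}$---and a rougher off-diagonal piece, absorbed via the stationary-phase decay of $K_n$ and the $\mathfrak T_1,\mathfrak T_3$ bounds already established. The restricted weak-type Lorentz bounds at $S,S'$ and on the half-open segments $(S,R]$, $(S',R']$ follow from real interpolation between strong-type bounds on adjacent edges, together with a Bourgain-type argument converting strong-type information at nearby vertices into $L^{p,1}\to L^{q,\infty}$ control at the corner.

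The principal obstacle is the sharp Carleson--Sj\"olin bound along $[P,S]$: this is where the polynomial-partitioning machinery of \cite{GHI} must be invoked, and the geometry of the excluded triangle is ultimately determined by the current state of the Bochner--Riesz/restriction program. A secondary technical issue is the dyadic angular decomposition: each annulus $\theta\sim 2^{-k}$ with $n^{-1}\lesssim 2^{-k}\lesssim 1$ has to be rescaled to a unit-scale Carleson--Sj\"olin operator of effective frequency $n\cdot 2^{-k}$, and the estimates from \cite{BG,GHI} must be applied uniformly in $k$ and summed without losing logarithms, which requires careful bookkeeping of the rescaled amplitudes.
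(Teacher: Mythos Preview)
Your overall strategy---kernel asymptotics, dyadic decomposition in the geodesic distance $\theta$, reduction to Carleson--Sj\"olin oscillatory integrals, summation via a Bourgain-type lemma for the Lorentz endpoints, and interpolation with the trivial $(2,2)$ and $(1,\infty)$ bounds---matches the paper's approach closely.

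There is, however, a genuine misidentification of the decisive oscillatory integral input. The paper does \emph{not} invoke Bourgain--Guth or Guth--Hickman--Iliopoulou; it explicitly declines to (see the Remark after the statement). The key ingredient is Theorem~\ref{osc-est2}, an improvement of Stein's Carleson--Sj\"olin theorem under the additional hypothesis \eqref{elliptic} that the curvatures of $z\mapsto\nabla_x\psi(x,z)$ have a common sign. This is what pushes the range from Stein's $q\ge \frac{2(d+1)}{d-1}$ down to $q>\frac{2(d+2)}{d}$, and the excluded triangle $[\mathcal Q,\mathcal U,\mathcal C]\setminus\mathcal C$ is \emph{exactly} the set where $q\le \frac{2(d+2)}{d}$ inside $\mathfrak T_1$: note $\mathcal U$ lies on the diagonal at $1/q=\frac{d}{2(d+2)}$ and $\mathcal Q$ lies on the Carleson--Sj\"olin line \eqref{ca-so} at the same height. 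Were you actually to use \cite{GHI}, you would obtain a strictly smaller excluded set than the one in the theorem, so your proposal would not recover the stated region. You also need to verify the ellipticity hypothesis \eqref{elliptic} for the phase $\arccos\phi_u$ arising from the spherical parametrization; the paper does this by an explicit Hessian computation (Section~3.1), and this step is essential since the general Carleson--Sj\"olin condition alone only gives Stein's weaker range.

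A second, smaller point: the paper does not treat $\mathfrak T_1,\mathfrak T_2,\mathfrak T_3$ separately as you describe. Rather, it establishes the single family of estimates \eqref{tj} for the dyadic pieces along the Carleson--Sj\"olin line \eqref{ca-so} with $q>\frac{2(d+2)}{d}$; summing these over $j$ yields $n^{\frac{d-1}{2}-\frac dq}$ when $p>\frac{2d}{d+1}$ (covering $\mathfrak T_3$) and $n^{d(\frac1p-\frac1q)-1}$ when $p<\frac{2d}{d+1}$ (covering the relevant edge of $\mathfrak T_2$), with Lemma~\ref{interpol} handling the critical $p=\frac{2d}{d+1}$. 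Everything else then follows by duality and interpolation with the trivial endpoints---there is no separate appeal to Sogge's $L^2\to L^q$ bound.
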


This gives an almost complete characterization of the bounds for the 2-dimensional projection operator {$H_n^2$} when $p\le q$ except some endpoint cases. 
\begin{rem} 
The region for sharp boundedness of Theorem \ref{k-proj}, as well as that of Theorem \ref{osc-est2} and Proposition \ref{osc} below, can be further extended by making use of improved estimates in \cite{GHI}. But we do not intend to pursue it here.  Related results will appear elsewhere. 
\end{rem}

As a consequence of Theorem \ref{k-proj}  we have,  for $(\frac1 p, \frac1q)\in (S, S')$, 
\begin{equation}\label{projection}
\| H_n^d f\|_{L^q(\mathbb S^d)}\lesssim n^{1-\frac 2{d+1} } \| f\|_{L^p(\mathbb S^d)},
\end{equation}
which is crucial for the proof of the Carleman estimate  \eqref{weight}.   \eqref{intervalrs} shows the natural bound   $ \hpqnorm\lesssim  n^{d(\frac1p-\frac1q) -1}$ does not hold for $(p,q)$ with $(\frac1p,\frac1q)\in  [S,R] \cup [S', R'].$  For these $p$, $q$ we can get $  \|H_n^d\|_{p,q}\lesssim  (\log n) n^{\gamma(p,q)}$ by direct summation. This was observed in Sogge \cite{So-strong} for $(\frac1p, \frac1q)=S,$ $S'$.  However, Theorem \ref{k-proj} provides weaker substitutes without  logarithmic loss.  

For $d\ge 3$, Sogge \cite{So} obtained \eqref{upper} for $p=\frac{2(d+1)}{d+3}$, $q=\frac{2(d+1)}{d-1}$ using a $T^*T$ argument and, since $H_n^d$ is self adjoint,  interpolation between trivial estimates gives the optimal bound \eqref{upper} for $(p,q)\in [\frac{2(d+1)}{d+3},2]\times  [2,  \frac{2(d+1)}{d-1}]$. For the special case $\frac1p-\frac 1q=\frac 2d$ and $\min(|\frac12-\frac1p|, | \frac12-\frac1q|)>\frac1{2d}$,  the estimate   was obtained by Huang and Sogge \cite{HS}  with sharp bound  $n$, and recently, this was extended by Ren \cite{R} to the range $\frac 2{d+1}\le \frac1p-\frac 1q\le\frac 2d$ and $\min(|\frac12-\frac1p|, | \frac12-\frac1q|)>\frac1{2d}$, which is contained in $\mathfrak T_2$. It should be mentioned that interpolation between this estimate and the previously known bounds does not give the optimal bounds in  Theorem \ref{k-proj} when $\frac1p-\frac 1q>\frac 2d$.

\subsubsection*{Oscillatory integral estimate} $H_n^df$ is given by the zonal convolution with the zonal spherical harmonic function $\bZ_n$ of degree $n$. In fact, 
\begin{equation}\label{representation}
H_n^df (\zeta)=\int_{\mathbb S^{d}} \mathbf Z_n(\xi\cdot\zeta)  f(\xi) d\sigma(\xi).
\end{equation}
Moreover, $\bZ_n$ can be explicitly expressed by the Jacobi polynomial $P_n^{\alpha, \beta}$: 
\begin{equation} \label{zkernel}
\bZ_n (t) =C(d,n) P_n^{(\frac{d-2}2, \frac{d-2}2)}(t), \quad t\in [-1,1] , 
\end{equation} 
where $C(d,n)=\big( \frac{2n}{d-1} +1\big)\frac{\Gamma(d/2) \Gamma(d+n-1)}{\Gamma(d-1) \Gamma(n+d/2)}$. After several steps of reduction which makes use of the asymptotic expansion of the Jacobi polynomials  (see Theorem \ref{FW} below) it will be seen that the heart of matter is to obtain sharp bound for an oscillatory operator which is very similar to a simpler model operator  
\[  T_\lambda f(x)=\lambda^\frac{d-1}2  \int_{\mathbb R^d} (1+\lambda  |x-y|)^{-\frac{d-1}2} a(x,y) e^{i\lambda |x-y|} f(y) dy\] 
where $a$ is a smooth function with compact support.   This kind of oscillatory integral operator appears in the studies of the Bochner-Riesz multiplier operator (\cite{CS}). Since this operator has singularity near the diagonal, we are naturally led to consider  dyadic decomposition away from it. This will reduce the problem to obtaining sharp bounds for each operator which results from decomposition.  As is well known,  to obtain the optimal bounds it is important to exploit the decay in $L^p$--$L^q$ bound due to the oscillatory kernel. This leads us to consider the oscillatory integral operators which satisfy the Carleson-Sj\"olin condition. 

\subsubsection*{Carleson-Sj\"olin condition and oscillatory integral} Let $\lambda\ge 1$,  $\psi$ be a smooth function, and  $a$  be a smooth function with compact support, which are defined on $\mathbb R^d\times \mathbb R^{d-1}$. Then we set
\[   S_\lambda  f(x)=\int e^{i\lambda \psi(x,z)} a(x,z) f(z) dz, \,\, \quad (x,z)\in  \mathbb R^d\times \mathbb R^{d-1}. \] 
Suppose that on the support of $a$   
\begin{equation}\label{cscon1}
\rank   \partial_z  \partial_x \psi= d-1
\end{equation}
and, if $\pm v\in \mathbb S^{d-1}$ is the unique vector  such that $ \partial_z  ( v \cdot \partial_x \psi) =0$, then 
\begin{equation}\label{cscon2}
\rank  \partial_z^2 ( v \cdot \partial_x \psi)  =d-1. 
\end{equation}
The conditions \eqref{cscon1} and \eqref{cscon2} are equivalently stated as follows:  The map $z\to \nabla_x \psi(x,z)$ defines a family of smooth immersed surfaces with nonvanishing Gaussian curvature.  The following is due to Stein \cite{St-beijing} (also, see \cite[Ch.9]{St-book}). 

\begin{thm}  \label{osc-est} Suppose $\psi$ satisfies \eqref{cscon1} and \eqref{cscon2} in the support of $a$. Then, for $p$, $q$ satisfying  $q\ge \frac{2(d+1)}{d-1}$ and 
\begin{equation}\label{ca-so}  
\frac{d+1}q\le (d-1)(1-\frac1p),   
\end{equation} 
the  estimate  $\|S_\lambda f\|_{q} \lesssim  \lambda^{-\frac dq} \|f\|_p$  holds. 
\end{thm}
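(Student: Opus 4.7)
The plan is to reduce to the endpoint estimate
\[
\|S_\lambda f\|_{L^{q_d}} \lesssim \lambda^{-d/q_d}\|f\|_{L^2}, \qquad q_d := \frac{2(d+1)}{d-1},
\]
and obtain the full theorem by interpolation with trivial bounds. Since $a$ has compact support, H\"older's inequality yields $\|f\|_{L^2}\lesssim \|f\|_{L^p}$ for $p\ge 2$, so the endpoint already covers the edge $1/q=1/q_d$, $1/p\in[0,1/2]$ of the admissible region. Riesz--Thorin interpolation between the endpoint and the trivial bound $\|S_\lambda f\|_{L^\infty}\lesssim \|a\|_\infty\|f\|_{L^1}$ (which carries no $\lambda$-decay) then produces the sharp exponent $\lambda^{-d/q}$ along the line $(d+1)/q=(d-1)(1-1/p)$, and further interpolation with trivial bounds handles the interior of the region.

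For the endpoint itself, apply $TT^*$ duality:
\[
\|S_\lambda\|_{L^2\to L^{q_d}}^2 = \|S_\lambda S_\lambda^*\|_{L^{q_d'}\to L^{q_d}}.
\]
The Schwartz kernel of $S_\lambda S_\lambda^*$ is
\[
K_\lambda(x,y)=\int e^{i\lambda(\psi(x,z)-\psi(y,z))}\,a(x,z)\,\overline{a(y,z)}\,dz.
\]
Writing $\psi(x,z)-\psi(y,z)=(x-y)\cdot\int_0^1 \nabla_x\psi(y+t(x-y),z)\,dt$, we note that by \eqref{cscon1}--\eqref{cscon2}, for each fixed $x_0$ the map $z\mapsto \nabla_x\psi(x_0,z)$ is an immersion onto a smooth hypersurface of nonvanishing Gaussian curvature in $\mathbb{R}^d$. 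Hence the $z$-integral is essentially the Fourier transform at frequency $\lambda(x-y)$ of a smooth density carried by a curved hypersurface, and the classical decay estimate for such Fourier transforms yields
\[
|K_\lambda(x,y)|\lesssim (1+\lambda|x-y|)^{-(d-1)/2}.
\]

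The principal obstacle is to upgrade this pointwise kernel bound to the operator estimate $\|S_\lambda S_\lambda^*\|_{L^{q_d'}\to L^{q_d}}\lesssim \lambda^{-2d/q_d}$. A direct application of Hardy--Littlewood--Sobolev to the kernel $(1+\lambda|x-y|)^{-(d-1)/2}$ only reaches $q\geq 4d/(d-1)$, which is strictly larger than $q_d$, so one must exploit the oscillatory structure of $K_\lambda$ itself, not merely its modulus. Following the Tomas--Stein strategy, I would introduce an analytic family $\{T^z\}$ with $T^0=S_\lambda S_\lambda^*$ whose kernels depend analytically on $z$ and differ from $K_\lambda$ by weights of the form $(1+\lambda|x-y|)^z$ (or a Bochner--Riesz-type regularization), and then deduce the desired bound at $z=0$ by Stein's interpolation of analytic families between an $L^1\to L^\infty$ bound at $\operatorname{Re} z=0$ (from the kernel estimate) and an $L^2\to L^2$ bound at $\operatorname{Re} z$ chosen large enough that the regularized kernel becomes integrable (so that Plancherel or Schur applies). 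Setting up the family and verifying uniform endpoint bounds with only polynomial growth in $|\operatorname{Im} z|$ constitutes the technical heart of the argument.
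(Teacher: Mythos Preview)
The paper does not prove Theorem~\ref{osc-est}; it is quoted as a known result of Stein \cite{St-beijing}, \cite[Ch.~IX]{St-book}, so there is no ``paper's own proof'' to compare against. Your outline follows the standard Stein strategy ($TT^*$, stationary-phase asymptotics of the kernel, complex interpolation), and the reduction to the $L^2\to L^{q_d}$ endpoint together with interpolation against trivial bounds is correct.

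There is, however, a real gap at the step you yourself flag as the ``technical heart.'' You propose to get the $L^2\to L^2$ endpoint of the analytic family by choosing $\operatorname{Re}z$ so that the damped kernel becomes integrable and then applying Schur (or Plancherel). But a Schur bound uses only $|K_\lambda^z|$, which by your own pointwise estimate is $\lesssim (1+\lambda|x-y|)^{-(d-1)/2-\operatorname{Re}z}$; interpolating that $L^2$ bound against the $L^1\to L^\infty$ bound at the other end lands you exactly at $q=4d/(d-1)$, the Hardy--Littlewood--Sobolev range you already identified as insufficient. No choice of $\operatorname{Re}z$ based on kernel \emph{size} alone reaches $q_d=2(d+1)/(d-1)$. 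In the constant-coefficient Tomas--Stein restriction argument the $L^2$ endpoint does come from Plancherel, but that works because the analytic family lives on the Fourier side; here there is no Fourier side to exploit.

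What is actually needed at the $L^2$ end is to keep the oscillatory factor $e^{i\lambda\Psi(x,y)}$ that stationary phase produces in $K_\lambda$, and to use that $\Psi$ itself inherits a nondegenerate mixed-Hessian/curvature structure from \eqref{cscon1}--\eqref{cscon2}. The $L^2\to L^2$ bound for the deformed operator then comes from a H\"ormander-type oscillatory integral $L^2$ estimate (or an equivalent almost-orthogonality argument), \emph{not} from integrability. This is precisely where \eqref{cscon2} does work beyond the pointwise kernel decay, and it is the missing ingredient in your sketch.
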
 

When $d=2$ the estimate was shown to be true by H\"{o}rmander \cite{Ho} for the optimal range of $p$, $q$ (\eqref{ca-so} and $q>4$). In higher dimensions it was shown by Bourgain  \cite{Bo-osc} that the estimate generally fails if $q< \frac{2(d+1)}{d-1}$ whenever $d\ge3$ is odd. However, it was observed in  \cite[Theorem 1.3]{Le1} that  the range can be improved under the stronger  assumption that 
\begin{equation}
\label{elliptic}
\text{the surface $z\to \nabla_x \psi(x,z)$ has $d-1$ nonzero principal curvatures of  the same sign,}
\end{equation} 
equivalently, the matrix $\partial_z^2 ( v \cdot \partial_x \psi)$ is positve definite or negative definite. Under the assumption \eqref{elliptic} the range can be improved to $q> \frac{2(d+2)}{d}.$  When $d=3$, it is known that the range is sharp.  In  \cite[Remark 3.4]{BS} (also see Theorem 2.1 in \cite{BS}), by removing $\epsilon$-loss of the estimate due to the second author \cite{Le1} they obtained the bound  
\[ \Big\| \sum_{(\nu, \nu')\in \Xi_j} Th_\nu^j    Th_{\nu'}^j  \Big\|_{L^{q/2}} \lesssim   2^{2j\big (   \frac{d+1}q-(d-1)(1-\frac1p)\big)} \lambda^{-\frac{2d}q}\| h \|_p  \| h  \|_p \] 
for $q> \frac{2(d+2)}{d}$ and $p\ge 2$.  Here we keep using the notations from \cite{BS}.  By summation along $j$  this gives the estimate $\|S_\lambda f\|_{q} \lesssim  \lambda^{-\frac dq} \|f\|_p$ for $p$, $q$ satisfying  \eqref{ca-so} with strict inequality.  However, this can be combined with a simple summation trick (for example, see Lemma \ref{interpol} below and \cite[Lemma 2.6]{Le-circle})  to give  restricted weak type estimate for $p$, $q$ satisfying \eqref{ca-so}  and   $q> \frac{2(d+2)}{d}$. These estimates can be (real) interpolated to yield the strong type bound. Hence, we have the following. 

\begin{thm}  \label{osc-est2} Suppose $\psi$ satisfies \eqref{cscon1} 
and \eqref{elliptic}. Then, for $p$, $q$ satisfying  $q> \frac{2(d+2)}{d}$ and  \eqref{ca-so}, 
the  estimate  $\|S_\lambda f\|_{q} \lesssim  \lambda^{-\frac dq} \|f\|_p$  holds. 
\end{thm}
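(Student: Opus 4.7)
The plan is to follow the Bourgain--Shao bilinear-to-linear scheme indicated in the paragraph above the statement. I first perform a Whitney decomposition of the product $z$-support of the amplitude $a$: at scale $j\ge 0$, I tile by cubes $\nu$ of side $\sim 2^{-j}$ and pair cubes $(\nu,\nu')\in\Xi_j$ whose centers are separated by $\sim 2^{-j}$. Writing $f=\sum_\nu h_\nu^j$ according to this partition at each scale, the square $(S_\lambda f)^2$ decouples as a sum over $j$ of bilinear pieces $\sum_{(\nu,\nu')\in\Xi_j} Th_\nu^j\,Th_{\nu'}^j$. The quoted estimate \cite[Remark 3.4]{BS}, which removes the $\varepsilon$-loss of \cite{Le1} by exploiting the elliptic hypothesis \eqref{elliptic} together with the Bennett--Carbery--Tao multilinear bound, reads
\[
\Big\|\sum_{(\nu,\nu')\in\Xi_j} Th_\nu^j\, Th_{\nu'}^j\Big\|_{L^{q/2}} \lesssim 2^{2j\beta(p,q)}\lambda^{-\frac{2d}{q}}\|h\|_p^2,
\]
where $\beta(p,q)=\frac{d+1}{q}-(d-1)(1-\frac1p)$, and is valid for $q>\frac{2(d+2)}{d}$ and $p\ge 2$.

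When condition \eqref{ca-so} holds strictly, $\beta(p,q)<0$, so after taking square roots the scale-$j$ estimates form a geometric series convergent in $j$; summing produces the strong-type bound $\|S_\lambda f\|_q\lesssim \lambda^{-d/q}\|f\|_p$ on every pair $(p,q)$ strictly below the critical line $\frac{d+1}{q}=(d-1)(1-\frac1p)$. The remaining issue is to reach this critical line itself, where the geometric series just barely fails to converge.

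To handle the critical line, I would invoke the summation trick of Lemma \ref{interpol} (compare \cite[Lemma 2.6]{Le-circle}): testing on characteristic functions $f=\chi_E$ and decomposing $\chi_E$ atomically by scale, one uses the quasi-triangle inequality in $L^{q/2,\infty}$---legitimate because $q>\frac{2(d+2)}{d}$ forces $q/2>1$---to organize the sum in $j$ so that the marginal divergence is absorbed into the passage from $L^p$ to $L^{p,1}$ and from $L^q$ to $L^{q,\infty}$. This yields a restricted weak-type $(p,q)$ bound precisely on the critical line. Real (Marcinkiewicz) interpolation between this endpoint bound and the strong-type bounds already obtained on nearby lines then upgrades everything to the full strong-type estimate on the claimed range. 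I expect the only delicate step to be the bookkeeping in the restricted weak-type argument, where one must ensure that the Whitney decomposition and the Lorentz atomic decomposition of $\chi_E$ interact cleanly and that no logarithmic loss survives; once this endpoint is in place, the interpolation step is routine.
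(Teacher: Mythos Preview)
Your proposal is correct and follows exactly the argument sketched in the paper: quote the Blair--Sogge bilinear bound, sum geometrically off the critical line, apply the Bourgain summation trick (Lemma~\ref{interpol}) to obtain restricted weak-type on the critical line, and then real-interpolate. One small correction: the reference \cite{BS} is Blair--Sogge, not ``Bourgain--Shao.''
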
 

\subsubsection*{Carleman estimate}  Let  $d\ge 3$ and   $1\le p\le q\le  \infty$.  For $\tau\in \mathbb R,$  we consider the estimate
\begin{equation}\label{weight} 
\| |x|^{-\tau} u\|_{{L^q(\mathbb R^d)} }\le C \| |x|^{-\tau} \Delta u\|_{{L^p(\mathbb R^d)} }, \quad u \in C_c^\infty(\mathbb R^d\setminus\{0\}). 
\end{equation} 
{Jerison-Kenig \cite{JK} showed that, for $p=\frac{2d}{d+2}$,  $q=\frac{2d}{d-2}$,   \eqref{weight}  holds with $C$ independent of $\tau$ under the condition $\dist(\tau, \mathbb Z+\frac{d-2}2) >0$. } Their proof is based on analytic interpolation between  $L^2$--$L^2$ and $L^1$--$L^\infty$ estimates for an analytic family of operators, which gives the  $L^\frac{2d}{d+2}$--$L^\frac{2d}{d-2}$ estimate  on the line of duality.   Bounds for $(p,q)$ other than $(\frac{2d}{d+2},  \frac{2d}{d-2})$ were also obtained by Stein \cite{St-append}.  He obtained \eqref{weight} for $p$, $q$ satisfying  $\frac12+\frac{d-2}{d(d-1)}<\frac 1p< \frac12+ \frac 1{d-1} $ and \eqref{scaling}. See \cite[Proof of Theorem 1]{St-append}. Compared to Theorem \ref{carleman} below, the range obtained by Stein is smaller when $d > 4$, coincides when $d = 4$, and is larger when $d = 3$. Jerison \cite{Je} later provided an alternative proof which is based on the bound $\|H_n^{d-1}\|_{\frac{2d}{d+2},\frac{2d}{d-2}}$ (see Section \ref{appcar}). 
 
Homogeneity dictates that \eqref{weight} is possible only if 
\begin{equation}\label{scaling}
\frac1p-\frac1q=\frac 2d. 
\end{equation} 
If $\tau\in (-d, d)$, it is easy to show  that  \eqref{weight}  holds with $C$ {depending on} $\tau$ for $1<p\le q<\infty$ satisfying \eqref{scaling}.  However, the uniformity 
of the  bound usually makes the  range of  admissible $(p,q)$ smaller. An easy argument using the spherical harmonics shows that  the uniform estimate \eqref{weight} can not be true for all $1<p\le  q<\infty$ satisfying \eqref{scaling} (see Remark \ref{fail} in the last section).  Similar phenomena were also observed in Kenig-Ruiz-Sogge \cite{KRS} where uniform Sobolev estimates for second order differential operators are obtained (also see \cite{JKL}).  

Following Jerison's argument and making use of \eqref{projection} we have the following. 

\begin{thm} \label{carleman}   Let $d\ge 3$. 
 Then, the estimate \eqref{weight} holds  uniformly in $\tau$  whenever  $\dist(\tau, \mathbb Z+\frac d{q})>0$ if $p$, $q$ satisfy \eqref{scaling}  and  $ \frac{2(d-1)d}{d^2+2d-4} < p < \frac{2(d-1)}{d}. $
\end{thm}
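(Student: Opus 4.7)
\emph{Plan.} Following the Jerison-type reduction outlined in the paper, I would use \eqref{projection} as the key spherical input.

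\emph{Step 1 — Reduction to the cylinder.} In polar coordinates $x = r\omega$, the change of variable $t = \log r$ sends $\mathbb{R}^d\setminus\{0\}$ to $\mathbb{R}\times\mathbb{S}^{d-1}$ with measure $e^{dt}\,dt\,d\omega$, and $r^2\Delta = \partial_t^2 + (d-2)\partial_t + \Delta_{\mathbb{S}^{d-1}}$. The gauge transform $w(t,\omega) = e^{\alpha t}u(e^t\omega)$ with $\alpha = d/q - \tau$ simultaneously absorbs the weight $|x|^{-\tau}$ on both sides of \eqref{weight} (this is exactly where \eqref{scaling} is used), and \eqref{weight} becomes
\[
\|w\|_{L^q(\mathbb{R}\times\mathbb{S}^{d-1})} \lec \|L_\alpha w\|_{L^p(\mathbb{R}\times\mathbb{S}^{d-1})},
\]
for the constant-coefficient operator $L_\alpha = (\partial_t-\alpha)^2 + (d-2)(\partial_t-\alpha) + \Delta_{\mathbb{S}^{d-1}}$. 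The Diophantine hypothesis $\dist(\tau,\mathbb{Z}+d/q)>0$ becomes $\alpha\notin\mathbb{Z}$.

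\emph{Step 2 — Spectral inversion and per-term estimate.} Projecting onto $\mathcal{H}_n^{d-1}$ and Fourier-transforming in $t$ diagonalize $L_\alpha$ as multiplication by $m_n(\xi) = (i\xi-\alpha-n)(i\xi-\alpha+n+d-2)$, which is nonvanishing on the real line when $\alpha\notin\mathbb{Z}$. Partial fractions in $\xi$ and inverse Fourier transform produce an explicit one-dimensional convolution kernel $K_n(t)$ consisting of truncated exponentials of size $\sim n^{-1}e^{-n|t|}$, so that
\[
L_\alpha^{-1}f(t,\omega) = \sum_{n\ge 0}(K_n\ast_t H_n^{d-1}f)(t,\omega).
\]
The assumption $\frac{2(d-1)d}{d^2+2d-4} < p < \frac{2(d-1)}{d}$ combined with \eqref{scaling} places $(1/p,1/q)$ on the open segment $(S(d-1), S'(d-1))$, so \eqref{projection} applied on $\mathbb{S}^{d-1}$ gives $\|H_n^{d-1}\|_{L^p(\mathbb{S}^{d-1})\to L^q(\mathbb{S}^{d-1})}\lec n^{1-2/d}$. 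Young's inequality in $t$ with $1/r = 1-2/d$, together with $\|K_n\|_{L^r_t}\lec n^{-2+2/d}$, bounds the $n$-th contribution to $\|L_\alpha^{-1}\|_{L^p\to L^q}$ by $n^{-1}$.

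\emph{Step 3 — Summation (main obstacle).} Since $\sum_n n^{-1}$ diverges logarithmically, the direct Minkowski combination of the per-term bounds fails by precisely a log factor. To close the gap, I would use the restricted weak-type endpoint statement of Theorem \ref{k-proj} at the vertices $S(d-1)$ and $S'(d-1)$ --- namely $L^{p,1}\to L^{q,\infty}$ at rate $n^{1-2/d}$ --- and real-interpolate this against the $L^2\to L^2$ bound, which is uniform in $\alpha\notin\mathbb{Z}$ via Plancherel and the orthogonality of spherical harmonics of distinct degrees; equivalently, one may run a $T^*T$ or Littlewood--Paley-in-$n$ summation argument in the spirit of Jerison \cite{Je}. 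This summation is the principal technical hurdle; once it is completed, reversing the gauge and the polar substitution recovers \eqref{weight}.
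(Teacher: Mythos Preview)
Your Steps~1--2 match the paper's reduction essentially verbatim: polar coordinates, gauge transform using \eqref{scaling}, spectral projection in $\omega$ and Fourier transform in $t$, partial fractions, and the one--dimensional convolution kernels $K_n(t)\sim n^{-1}e^{-n|t|}$ all appear in the paper's proof of Proposition~\ref{suf}. The divergence of $\sum n^{-1}$ that you flag in Step~3, however, is not a genuine obstacle but an artifact of the order in which you estimate.

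You apply Young's inequality in $t$ term--by--term and then try to sum; the paper does the opposite. After Minkowski and \eqref{projection} one has
\[
\|I_1u(t,\cdot)\|_{L^q(\mathbb S^{d-1})}\lesssim \int_{-\infty}^t\Big(\sum_{n>\widetilde\tau} n^{-2/d}e^{-(n-\widetilde\tau)(t-s)}\Big)\|u(s,\cdot)\|_{L^p(\mathbb S^{d-1})}\,ds,
\]
and the point is to sum the kernel in $n$ \emph{pointwise}: for $s>0$ one gets $\sum_{n>\widetilde\tau} n^{-2/d}e^{-(n-\widetilde\tau)s}\lesssim s^{2/d-1}e^{-\tilde c s}$ (a Riemann--sum computation). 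This is exactly the Hardy--Littlewood--Sobolev kernel of order $2/d=1/p-1/q$ with exponential cutoff, so a single application of HLS in $t$ closes the estimate with no loss. Your per--term Young bound throws away the pointwise cancellation between the $e^{-n|t|}$ at different scales; note that $|s|^{2/d-1}$ is \emph{not} in $L^r$ for $1/r=1-2/d$, so HLS is genuinely stronger than Young here and is what saves the logarithm. The interpolation scheme you propose in Step~3 is unnecessary, and as stated it does not obviously work either: the restricted weak--type bounds at $S(d-1)$, $S'(d-1)$ carry the same growth rate $n^{1-2/d}$, so the identical summation issue would reappear at the endpoints before you could interpolate.
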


It is well known  \cite{JK} that under suitable conditions on $u$  the inequality \eqref{weight} implies strong unique continuation property for the inequality $|\Delta u(x)|\le V(x) |u(x)|$ when $V\in L^{d/2}_{loc}(\mathbb R^d)$.  As was shown by Stein \cite{St-append} real interpolation between estimates in Theorem \ref{carleman}  yields a refined estimate, for the same $p$, $q$  as above, $\| |x|^{-\tau} u\|_{L^{q,p}(\mathbb R^d)}  \le C \| |x|^{-\tau} \Delta u\|_{L^p(\mathbb R^d)} $  provided $u \in C_c^\infty(\mathbb R^d\setminus\{0\})$, which extends the strong unique continuation property to $V\in L^{d/2,\infty}_{loc}(\mathbb R^d)$ under smallness condition.

In Sogge \cite[final remark]{So-strong}  it was briefly (without proof) mentioned that the method therein can be applied to obtain \eqref{weight} for $p,q$ on a certain range.  The argument was based on  the weaker estimate $\|H^{d-1}_n\|_{p,q}\lesssim (\log n)\, n^{1-\frac 2d}$ for some $p$, $q$ satisfying \eqref{scaling}.  However, the proof there doesn't  seem easily accessible.\footnote{Numerology in \cite{So-strong} are not correct and should be corrected as those in this paper.}  So we decided to include Theorem \ref{carleman} and its proof which is based on the spectral projection estimate \eqref{projection}. In view of Stein's result  when $d=3$ and the range in Remark \ref{fail} below, the range in Theorem \ref{carleman} is unlikely to be optimal even in higher dimensions.   The problem of characterizing $p$, $q$ for which \eqref{weight} holds remains open. 

{\subsubsection*{Notations} For positive real numbers $X$ and $Y$, we use the notation  $X\lesssim Y$ (or $Y\gtrsim X$) to  say that there is a $C$ that $X\le CY$. Particularly, $\|H_n^d\|_{p,q}\lesssim n^{\gamma(p,q)}$ means that  $\|H_n^d\|_{p,q}\le C n^{\gamma(p,q)}$, where $C$ is a constant independent of $n$, although it may depend on $p$, $q$ and $d$. We also use the notation $X\sim Y$ which denotes $X\lesssim Y$ and $Y\lesssim X$.}

\subsubsection*{Acknowledgement} Y. Kwon was supported by NRF grant no. 2015R1A4A1041675 (Korea) and  S. Lee was supported by NRF grant no. 2015R1A2A2A05-000956 (Korea). Part of this work was carried out while  the second named author was  visiting  Departmemt of Mathematics Kyoto University.  He would like to thank Prof. Yoshio Tsutsumi for hospitality during his visit.

\section{Preliminaries} 

\subsubsection*{Asypmtotic expansion of the Jacobi polynomials}  For our purpose we need asymptotic expansion of  the polynomial $P_n^{(\frac{d-2}2, \frac{d-2}2)}(\cos\theta)$.  We use the following theorem from Frenzen and Wong \cite{Frenzen-Wong}.  A similar result was also obtained in Szeg\"o \cite{Sz2}. 

\begin{thm}\cite[Main Theorem]{Frenzen-Wong}  \label{FW}
Let $N=n+(\alpha+\beta+1)/2$.  For $\alpha>-1/2$, $ \alpha-\beta>-2m$,  $\alpha+\beta\ge -1$, and $\theta\in (0,\pi)$, we have 
\begin{align} \label{asymp}
P_n^{(\alpha,\beta)}(\cos\theta)&=\frac{\Gamma(n+\alpha+1) }{n !}
\Big(\sin\frac\theta2\Big)^{-\alpha}
\Big(\cos\frac\theta2\Big)^{-\beta}\Big(\frac\theta{\sin\theta}\Big)^{\frac12}\\
&\qquad\times \Big[ \,\sum_{l=0}^{m-1} A_l(\theta) N^{-\alpha-l} J_{\alpha+l}(N\theta)+\theta^\alpha O(N^{-m})\,\Big],\nonumber
\end{align}
where $A_l$ are analytic functions on $[0,\pi)$ and the $O$-term is uniform with respect to $\theta\in [0, \pi-\epsilon]$, $\epsilon$  being an arbitrary positive number. Here, $J_\nu$ denotes the Bessel function of order $\nu>-1/2$.
\end{thm}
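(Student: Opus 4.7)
The plan is to derive \eqref{asymp} from a Dirichlet--Mehler-type integral representation of the Jacobi polynomial, combined with Poisson's integral representation of the Bessel function. The natural starting point expresses $P_n^{(\alpha,\beta)}(\cos\theta)$ as an explicit prefactor (built from $\Gamma$-values and the powers $(\sin(\theta/2))^{-\alpha}(\cos(\theta/2))^{-\beta}$ appearing on the right-hand side of \eqref{asymp}) times an oscillatory integral of the schematic form
\[
\int_0^\theta (\cos\phi-\cos\theta)^{\alpha-1/2}\, \cos\bigl(N\phi+\chi(\theta,\phi)\bigr)\, h(\theta,\phi)\, d\phi,
\]
where $N=n+(\alpha+\beta+1)/2$ and $\chi,h$ are real-analytic in $(\theta,\phi)$; the hypothesis $\alpha>-1/2$ guarantees integrability at the endpoint $\phi=\theta$, while $\alpha+\beta\ge -1$ controls the $\Gamma$-ratios absorbed into the prefactor.

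My first step is a change of variable that flattens the singular endpoint, for instance $\cos\phi-\cos\theta = \sin\theta\,(1-s)$, after which the integral takes the form
\[
\brk{\sin\theta/\theta}^{1/2}\theta^{\alpha+1/2}\int_0^1 (1-s^2)^{\alpha-1/2}\, K(\theta,s)\, e^{iN\theta s}\, ds + (\text{complex conjugate}),
\]
where $K$ is smooth on $[0,\pi-\varepsilon]\times[0,1]$, with $K(\theta,0)$ computable in closed form. This brings into view Poisson's representation
\[
J_\mu(x) = \frac{(x/2)^\mu}{\sqrt{\pi}\,\Gamma(\mu+\tfrac12)}\int_{-1}^{1}(1-s^2)^{\mu-1/2}\,e^{ixs}\, ds,\qquad \mu>-\tfrac12,
\]
which at $\mu=\alpha$, $x=N\theta$ isolates the leading term with $A_0(\theta)$ determined by $K(\theta,0)$ together with the explicit Dirichlet--Mehler prefactor.

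The successive terms are extracted by Taylor-expanding $K(\theta,s)=\sum_{l=0}^{m-1}K_l(\theta)s^l + s^m R_m(\theta,s)$ and then repeatedly applying the identity $s(1-s^2)^{\mu-1/2} = -\frac{1}{2\mu+1}\partial_s(1-s^2)^{\mu+1/2}$ followed by integration by parts. This raises the Bessel index from $\mu$ to $\mu+1$ and simultaneously produces a factor of $(N\theta)^{-1}$, so iterating $l$ times converts the $s^l$ term to $A_l(\theta)N^{-\alpha-l}J_{\alpha+l}(N\theta)$, with $A_l(\theta)$ a polynomial in the Taylor coefficients of $K(\theta,\cdot)$ at $s=0$ and in the analytic data of the change of variable, hence analytic on $[0,\pi)$.

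The remainder integral involving $s^m R_m$ admits an $O((N\theta)^{-m})$ bound by integrating by parts $m$ times against $e^{iN\theta s}$, which after reinstating the $\theta^{\alpha}$ prefactor yields the advertised $\theta^\alpha O(N^{-m})$ error; here the hypothesis $\alpha-\beta>-2m$ is precisely what is needed to absorb the $\beta$-dependent corrections coming from the phase $\chi(\theta,\phi)$ into the allowed order. Uniformity on $[0,\pi-\varepsilon]$ is automatic since neither the weight $(\cos\phi-\cos\theta)^{\alpha-1/2}$ nor the outside factor $(\cos(\theta/2))^{-\beta}$ degenerates there. The main obstacle I anticipate is the coefficient bookkeeping: one must verify that the repeated integrations by parts produce honestly analytic $A_l(\theta)$ rather than mere formal power-series fragments, and carefully use the Bessel recurrence $xJ_{\mu+1}(x)=(2\mu+1)J_\mu(x)-xJ_{\mu-1}(x)$ to collapse the intermediate algebraic terms that appear along the way.
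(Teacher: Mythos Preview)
The paper does not prove this theorem at all: Theorem~\ref{FW} is quoted verbatim from Frenzen and Wong \cite{Frenzen-Wong} and used as a black box throughout the paper. There is no ``paper's own proof'' to compare your proposal against.

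Your sketch does follow the general strategy of the original Frenzen--Wong argument (Dirichlet--Mehler integral representation, change of variable to flatten the endpoint, expansion against Poisson's formula for Bessel functions, iterated integration by parts for the successive terms). As a sketch it is reasonable, though the bookkeeping you flag at the end is indeed where the real work lies, and the role of the hypothesis $\alpha-\beta>-2m$ is not quite what you describe: in Frenzen--Wong it enters through the structure of the recursion defining the $A_l$ and the remainder bound, not merely through ``$\beta$-dependent corrections from the phase.'' If your goal is to fill in this proof for the present paper, it would be more appropriate simply to cite \cite{Frenzen-Wong} as the authors do.
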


In particular,  $A_0=1$. (See \cite[p. 994]{Frenzen-Wong}.) Recall the symmetric property of Jacobi polynomials:  $P_n^{(\alpha, \beta)}(-z)=(-1)^n P_n^{(\beta, \alpha)}(z)$. In particular, we have 
\begin{equation} \label{sym}
P_n^{(\frac{d-2}2, \frac{d-2}2)}(\cos (\pi-\theta))=(-1)^n P_n^{(\frac{d-2}2, \frac{d-2}2)}(\cos\theta).
\end{equation} 
By this symmetry we may get around the uniformity issue related to $\epsilon$, in the above theorem. 

\subsubsection*{Oscillatory intergral estimate}  In what follows we obtain oscillatory integral estimates which are needed  later. 
\begin{lem}\label{cosine}  Let $\epsilon>0$ and set $\psi_\epsilon (\theta) =\epsilon^{-1}\arccos(1-\epsilon^2\theta)$. Then 
$\psi_\epsilon\to \sqrt 2\theta^\frac12$ as $\epsilon\to 0$ in $C^N([2^{-8}, 2^{8}])$ for any $N$. 
\end{lem}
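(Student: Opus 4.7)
The plan is to remove the apparent singularity at $\epsilon=0$ by rewriting $\psi_\epsilon$ via an $\arcsin$-identity, after which $C^N$-convergence follows from elementary analyticity considerations. Set $\phi = \arccos(1 - \epsilon^2\theta) \in [0,\pi)$ and apply the half-angle identity $\cos\phi = 1 - 2\sin^2(\phi/2)$ to get $\sin(\phi/2) = \epsilon\sqrt{\theta/2}$ (taking the positive root since $\phi/2 \in [0,\pi/2)$). Inverting yields
\[
\psi_\epsilon(\theta) = \frac{2}{\epsilon}\arcsin\!\bigl(\epsilon\sqrt{\theta/2}\bigr).
\]

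Since $\arcsin$ is real-analytic on $(-1,1)$ with $\arcsin(w) = w + w^3/6 + \cdots$, one may factor $\arcsin(w) = w + w^3 h(w^2)$ for a real-analytic function $h$ on a neighborhood of $0$. Substituting $w = \epsilon\sqrt{\theta/2}$ and multiplying by $2/\epsilon$ gives
\[
\psi_\epsilon(\theta) - \sqrt{2\theta} \;=\; 2\epsilon^2\, (\theta/2)^{3/2}\, h\!\bigl(\epsilon^2\theta/2\bigr).
\]
The constant term is correct because $2\sqrt{\theta/2}=\sqrt{2\theta}$, so the $\epsilon^{-1}$ has been completely absorbed.

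For $\theta\in[2^{-8},2^8]$ and $\epsilon$ sufficiently small, the argument $\epsilon^2\theta/2$ lies well inside the domain of analyticity of $h$, so $(\epsilon,\theta)\mapsto h(\epsilon^2\theta/2)$ is jointly smooth with all derivatives in $\theta$ bounded uniformly as $\epsilon\to 0$. Since the factor $\theta^{3/2}$ is $C^\infty$ on the compact interval (which is bounded away from $0$), every $\theta$-derivative of the right-hand side is $O(\epsilon^2)$ uniformly on $[2^{-8},2^8]$. This yields the stated $C^N$-convergence for arbitrary $N$.

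There is no real obstacle in this proof: the only step requiring any insight is recognizing the $\arcsin$-identity, which exposes $\psi_\epsilon$ as a jointly smooth function of $(\epsilon,\theta)$ near $\epsilon=0$; the remaining $C^N$ bookkeeping is routine.
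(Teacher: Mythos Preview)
Your proof is correct. Both your argument and the paper's ultimately establish the same key fact, namely that $\arccos(1-s)=\sqrt{2s}\,g(s)$ for a function $g$ analytic near $0$ with $g(0)=1$, after which substituting $s=\epsilon^2\theta$ gives $\psi_\epsilon(\theta)-\sqrt{2\theta}=\epsilon^2\,\theta^{3/2}\cdot(\text{smooth in }\epsilon^2\theta)$ and the $C^N$ bound follows. The paper obtains this via the integral representation $\arccos(1-\theta)=\int_0^\theta(2\tau-\tau^2)^{-1/2}\,d\tau$ and a Taylor expansion of the integrand, leaving the passage from the $O(\theta^{3/2})$ remainder to the $C^N$ bound somewhat implicit. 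Your half-angle identity $\psi_\epsilon(\theta)=2\epsilon^{-1}\arcsin(\epsilon\sqrt{\theta/2})$ is a nicer device: it makes the joint smoothness in $(\epsilon,\theta)$ and the explicit $\epsilon^2$ prefactor visible in one line, so the $C^N$ convergence (with rate $O(\epsilon^2)$, slightly sharper than the paper's stated $O(\epsilon)$) requires no further unpacking.
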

\begin{proof} Since $(\arccos \theta)'=-1/\sqrt{1-\theta^2}$ and $\arccos 1=0$, we have 
\[\arccos(1-\theta)=\int_{1-\theta}^1 \frac{d\tau}{\sqrt{1-\tau^2}} 
= \int_0^\theta \frac{d\tau}{\sqrt{2\tau-\tau^2}}=\sqrt 2\theta^\frac12+O(\theta^\frac32).\] 
Hence, $\psi_\epsilon(\theta)=\sqrt 2\theta^\frac12+ R_\epsilon(\theta)$ with $\|R_\epsilon\|_{C^N([2^{-8}, 2^{8}])}=O(\epsilon)$. This gives the convergence  in $C^N([2^{-8}, 2^{8}])$. 
\end{proof} 

\newcommand{\acos}{\arccos}
\begin{lem} \label{converge} Let us set 
\[ \Phi_\epsilon(x,y)=  \epsilon^{-1}\,  \acos \big( \epsilon^2 x\cdot y+ \sqrt{(1-\epsilon^2|x|^2)(1-\epsilon^2|y|^2)}\big).\] 
Then, for $|x-y|\in  [2^{-3},  2^{3}] $, $\Phi_\epsilon(x,y) \to |x-y|.$ Furthermore, the convergence holds in $C^N(K_1\times K_2)$ for any large $N$ as long as $K_1, K_2$ are compact subsets of $\mathbb R^d$ satisfying $2^{-3}\le |x- y|\le 2^3$ for $(x,y)\in K_1\times K_2$.
\end{lem}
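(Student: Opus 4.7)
The plan is to reduce the statement to the previous Lemma \ref{cosine} by recognizing that $\Phi_\epsilon$ is just $\psi_\epsilon$ composed with a smooth $\epsilon$-perturbation of $|x-y|^2/2$, and then invoke the chain rule to propagate $C^N$ convergence through the composition.

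First I would write $\Phi_\epsilon(x,y) = \psi_\epsilon(\Theta_\epsilon(x,y))$, where
\[ \Theta_\epsilon(x,y) := \epsilon^{-2}\Big(1 - \epsilon^2 x\cdot y - \sqrt{(1-\epsilon^2|x|^2)(1-\epsilon^2|y|^2)}\Big). \]
Then I would Taylor expand the square root jointly in $\epsilon^2$ and $(x,y)$ on the region where $\epsilon^2 \max(|x|^2, |y|^2) < 1/2$. Setting $u = \epsilon^2$, the map $u \mapsto \sqrt{(1-u|x|^2)(1-u|y|^2)}$ is smooth near $u=0$ and its Taylor expansion yields
\[ \sqrt{(1-\epsilon^2|x|^2)(1-\epsilon^2|y|^2)} = 1 - \tfrac{\epsilon^2}{2}(|x|^2+|y|^2) + \epsilon^4 R_\epsilon(x,y), \]
where $R_\epsilon$ is smooth in $(\epsilon,x,y)$ and has all $(x,y)$-derivatives uniformly bounded on $K_1\times K_2$ for $\epsilon$ small. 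Combining with the $\epsilon^2 x\cdot y$ term gives
\[ \Theta_\epsilon(x,y) = \tfrac{1}{2}|x-y|^2 - \epsilon^2 R_\epsilon(x,y), \]
so $\Theta_\epsilon \to \tfrac{1}{2}|x-y|^2$ in $C^N(K_1\times K_2)$ for every $N$.

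Next I would check that the range of $\Theta_\epsilon$ is controlled. On the region $|x-y|\in [2^{-3}, 2^3]$, the limit $\tfrac{1}{2}|x-y|^2$ lies in $[2^{-7}, 2^5]$; by the $C^0$ convergence just established, for all sufficiently small $\epsilon$ the image $\Theta_\epsilon(K_1\times K_2)$ is contained in $[2^{-8}, 2^8]$. On this interval Lemma \ref{cosine} gives $\psi_\epsilon \to \sqrt{2}\,\theta^{1/2}$ in $C^N$.

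Finally, I would apply the chain rule (Faà di Bruno) to the composition $\psi_\epsilon\circ \Theta_\epsilon$: each $C^N$-derivative is a polynomial in derivatives of $\psi_\epsilon$ (evaluated at $\Theta_\epsilon$) and derivatives of $\Theta_\epsilon$, and both families converge in $C^N$ on the relevant sets while $\psi_\epsilon$ stays away from the singularity at $\theta=0$. Hence $\Phi_\epsilon \to \sqrt{2}\,(\tfrac{1}{2}|x-y|^2)^{1/2} = |x-y|$ in $C^N(K_1\times K_2)$. The only mild obstacle is ensuring the uniform smoothness of the remainder $R_\epsilon$ (which is routine from the smooth dependence of $\sqrt{\,\cdot\,}$ on its argument bounded away from zero) and arranging the threshold on $\epsilon$ so that $\Theta_\epsilon(K_1\times K_2)$ lies inside the interval on which Lemma \ref{cosine} applies; both are straightforward.
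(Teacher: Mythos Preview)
Your proof is correct and follows essentially the same approach as the paper: writing $\Phi_\epsilon=\psi_\epsilon\circ\Theta_\epsilon$ (the paper calls your $\Theta_\epsilon$ by the name $\mu_\epsilon$), showing the inner function converges in $C^N$ to $\tfrac12|x-y|^2$, checking its range lands in $[2^{-8},2^{8}]$, and then applying Lemma~\ref{cosine}. The only cosmetic difference is that the paper rationalizes the square root to obtain the explicit closed form $\mu_\epsilon(x,y)=\frac{|x|^2+|y|^2-\epsilon^2|x|^2|y|^2}{1+\sqrt{(1-\epsilon^2|x|^2)(1-\epsilon^2|y|^2)}}-x\cdot y$, whereas you reach the same conclusion via Taylor expansion with a smooth remainder.
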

\begin{proof} We write
\[ \epsilon^2 x\cdot y+ \sqrt{(1-\epsilon^2|x|^2)(1-\epsilon^2|y|^2)} = 1- \epsilon^2\Big( \frac{|x|^2+|y|^2-\epsilon^2|x|^2|y|^2}{1+\sqrt{(1-\epsilon^2|x|^2)(1-\epsilon^2|y|^2)}} -x\cdot y\Big),\] 
and set 
\[ \mu_\epsilon(x,y)= \frac{|x|^2+|y|^2-\epsilon^2|x|^2|y|^2}{1+\sqrt{(1-\epsilon^2|x|^2)(1-\epsilon^2|y|^2)}} -x\cdot y.\] 
Clearly $\mu_\epsilon(x,y)\to |x-y|^2/2$ as $\epsilon\to 0$ and $\Phi_\epsilon=\psi_\epsilon \circ \mu_\epsilon$. Hence, with sufficiently small $\epsilon>0$ which gives $ \mu_\epsilon(x,y)\in [2^{-8}, 2^8]$, Lemma  \ref{cosine}  shows the convergence. 
\end{proof} 

Let us define the oscillatory integral operator $T_\lambda^\epsilon$ by
\begin{equation}
T_\lambda^\epsilon f(x)= \int  a(x,y) e^{i\lambda \Phi_\epsilon(x,y)}   f(y) dy
\end{equation}
where $a\in C_c^{\infty}(\mathbb  R^d\times \mathbb R^d)$ and  $\supp a\subset  \{(x,y): 2^{-2}\le |x-y|\le 2^2\}$. 

Since $|x-y|\ge 2^{-2}$ by decomposing the support of $a$  we may assume $|x_i-y_i|\ge  (4d)^{-1}$  for some $i=1,\dots, d$. In particular, let us assume $|x_d - y_d |\ge  (4d)^{-1}$ on the support of $a$ and set  $\Phi_0(x,y)=|x-y|$, and $\Phi_0^{y_d}(x,z)=\sqrt{ |\bar x-z|^2+ (x_d-y_d)^2}$, where $x=(\bar x, x_d)$ and $y=(z,y_d)$. It is easy to show that  $\Phi_0^{y_d}$ satsfies \eqref{cscon1} and \eqref{elliptic} since  $|x_d-y_d|\sim (4d)^{-1}$.  Freezing the variable $y_d$ and  using  Minkowski's inequality and  {Theorem \ref{osc-est2}}, we see that 
 \[  \| T_\lambda^0 f\|_{q}\le C  \int  \Big\|\int a(x,z, y_d) e^{i\lambda  \Phi_0^{y_d}(x,z)} f(z, y_d) dz\,\Big\|_{q} dy_d \lesssim \lambda^{-\frac dq} \|f\|_p .\] 
 for $p$, $q$ satisfying \eqref{ca-so} {and $q>\frac{2(d+2)}{d}$}.  From  Lemma  \ref{converge}, we also see  that the same argument also works with $\Phi_\epsilon^{y_d}(x,z)=\Phi_\epsilon(x,z,y_d)$ as long as $\epsilon$ is small enough. Furthermore, since the bound   for $S_\lambda$ in Theorem \ref{osc-est} is stable  under smooth small perturbation of the phase, we see that  the oscillatory integral operators defined by $\Phi_\epsilon^{y_d}$ have uniform bounds. Hence,  from the above argument we get the following. 

\begin{prop}\label{osc}
For $p$, $q$ satisfying  \eqref{ca-so} and {$q>\frac{2(d+2)}{d}$}, there are constants  $\epsilon_0>0$ and $C>0$ such that 
\begin{equation} 
\|T_\lambda^\epsilon f\|_{q} \le C  \lambda^{-\frac dq} \|f\|_p
\end{equation} 
provided that  $ 0<\epsilon\le \epsilon_0 $.  Here $C$ is independent of $\epsilon\in(0,\epsilon_0]$ and $\lambda\ge1$.
\end{prop}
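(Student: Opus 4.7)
The plan is to implement the strategy outlined in the paragraph preceding the statement: freeze one coordinate, verify the Carleson--Sj\"olin hypotheses for the limiting phase, and then invoke the stability of Theorem \ref{osc-est2} under $C^N$-small perturbations.

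First, after a finite decomposition of $\supp a$ one may assume $|x_d - y_d| \geq (4d)^{-1}$ on each piece. Writing $x = (\bar x, x_d)$, $y = (z, y_d)$ with $\bar x, z \in \R^{d-1}$ and treating $y_d$ as a parameter, set
\[ \Phi_\epsilon^{y_d}(x,z) := \Phi_\epsilon(x,(z,y_d)). \]
By Minkowski's inequality in the (compactly supported) $y_d$-variable, the desired estimate reduces to a uniform-in-$y_d$ bound for the oscillatory integral operator with phase $\Phi_\epsilon^{y_d}$, viewed as a map from functions of $z \in \R^{d-1}$ to functions of $x \in \R^d$; this is precisely the framework of Theorem \ref{osc-est2}.

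Next, I would verify \eqref{cscon1} and \eqref{elliptic} for the limiting phase
\[ \Phi_0^{y_d}(x,z) = \sqrt{|\bar x - z|^2 + (x_d - y_d)^2}. \]
With $x$ fixed, the map $z \mapsto \nabla_x \Phi_0^{y_d}(x,z) = (x-(z,y_d))/|x-(z,y_d)|$ parameterizes a piece of the unit sphere $\mathbb S^{d-1} \subset \R^d$; the lower bound $|x_d - y_d| \gtrsim 1$ ensures this is a smooth immersion onto an open subset bounded away from the equator $\{\xi_d = 0\}$, so \eqref{cscon1} holds. Since the unit sphere has all $d-1$ principal curvatures equal to $1$, the positivity condition \eqref{elliptic} holds as well. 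Theorem \ref{osc-est2} then supplies the bound for $T_\lambda^0$.

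Finally, for $\epsilon > 0$ small, Lemma \ref{converge} gives $\Phi_\epsilon^{y_d} \to \Phi_0^{y_d}$ in $C^N$ on the support of the amplitude, uniformly in $y_d$. The rank and definiteness conditions \eqref{cscon1}--\eqref{elliptic} are open in the $C^N$ topology, so they persist uniformly for $\epsilon \in (0, \epsilon_0]$ with $\epsilon_0 > 0$ sufficiently small. The main (mild) obstacle is to make this stability claim precise: one must observe that the constant in Theorem \ref{osc-est2} depends only on finitely many $C^N$-derivative bounds of $\psi$ and $a$ together with the quantitative lower bounds in the Carleson--Sj\"olin hypotheses, all of which are uniform in $\epsilon \in (0, \epsilon_0]$; this yields the $\epsilon$-uniform estimate claimed.
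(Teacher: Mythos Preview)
Your proposal is correct and follows essentially the same route as the paper: freeze $y_d$ after a finite decomposition ensuring $|x_d-y_d|\gtrsim 1$, verify \eqref{cscon1} and \eqref{elliptic} for the limiting phase $\Phi_0^{y_d}(x,z)=\sqrt{|\bar x-z|^2+(x_d-y_d)^2}$ (your observation that $z\mapsto\nabla_x\Phi_0^{y_d}$ parametrizes a piece of $\mathbb S^{d-1}$ is a clean way to do this), apply Theorem \ref{osc-est2} and Minkowski, and then invoke Lemma \ref{converge} together with the stability of the Carleson--Sj\"olin bounds under $C^N$-small perturbations to obtain the $\epsilon$-uniform estimate.
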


\section{Spectral projection operator: Proof of Theorem \ref{k-proj}} 
Fix $0<  r\le  10^{-2}\epsilon_0 $. Here $\epsilon_0$ is the one in   Proposition \ref{osc}.   In order to prove Theorem \ref{k-proj}, by rotational symmetry and finite decomposition of $\mathbb S^d$   it is sufficient to show that 
\begin{equation}\label{total}
 \|H_n^df\|_{L^q (\mathcal C(e_{d+1}, r )) } \lesssim  n^{\gamma(p,q)} \|f\|_p. 
\end{equation}
Here $\mathcal C(e,\rho)$  is the geodesic ball centered at $e\in\mathbb S^d$ with radius $\rho>0$.
 
We distinguish three cases  in which  $f$ is supported near the north pole $e_{d+1}$,  near the south pole $-e_{d+1}$,  and  away from both  the north pole $e_{d+1}$ and the south pole $-e_{d+1}$, respectively:
\begin{gather}
\supp f \subset \big\{  x\in \mathbb S^{d}:   x_{d+1} \in (1-100r^2, 1]\big\}, \label{north} \\
\supp f \subset \big\{  x\in \mathbb S^{d}:   x_{d+1} \in [-1, -1+100r^2)\big\}, \label{south} \\
\supp f  \subset \big\{  x\in \mathbb S^{d}:   x_{d+1} \in [-1+100r^2, 1-100r^2 ]\big\}. \label{awaynorthsouth}
\end{gather}
Recalling \eqref{representation} and \eqref{zkernel} we note that  $\arccos (\zeta\cdot\xi)\in (\pi-100r,  \pi] $ if $\zeta\in \mathcal C(e_{d+1},r)$ and $\xi\in \big\{  x\in \mathbb S^{d}:   x_{d+1} \in [-1, -1+100r^2)\big\}.$   Since  the asymptotic expansion \eqref{asymp} is uniform only for $\theta\in [0, \pi-\epsilon]$, we can not  make use of it directly when we handle  the second case \eqref{south}. However, by \eqref{sym} we may  again use \eqref{asymp} after reflection. In this manner the case \eqref{south} can be handled in the same way as the case \eqref{north}. Therefore  it  is sufficient to consider the first  and the third cases only. 

Hence, for the rest of this section,  we assume $\theta:=\arccos (\zeta\cdot\xi)\in [0, \pi- r\,]$. For simplicity  we  set $\nu=\frac{d-2}2$ and 
\[\mathcal A_\nu(\theta)=  \frac{\Gamma(n+\nu+1) }{n !} \Big(\sin\frac\theta2\Big)^{-\nu} \Big(\cos\frac\theta2\Big)^{-\nu}\Big(\frac\theta{\sin\theta}\Big)^{\frac12}. \] 
From \eqref{asymp} we may write 
\begin{equation}\label{asymp2}
P_n^{(\nu,\nu)}(\cos\theta)= \mathcal A_\nu (\theta)\Big[ \,\sum_{l=0}^{m-1} A_l(\theta) N^{-\nu-l} J_{\nu+l}(N\theta) \Big]+\mathcal E_m(\theta), 
\end{equation}  
where $\mathcal E_m(\theta)=O(N^{-m+\nu})$.  We fix $m$ large enough so that $ \mathcal E_m(\theta)=O(N^{-\frac{d} 2})$.  Hence the contribution of  $\mathcal E_m(\theta)$  to the convolution kernel \eqref{zkernel} is $O(1)$, and is negligible since $\gamma(p,q)\ge 0$.

\subsection{Away from the north and the  south poles}  In this case the bound is much better than what we need to show.  In fact,  assuming \eqref{awaynorthsouth} we show, for $p, q$ satisfying  \eqref{ca-so} and $q>\frac{2(d+2)}{d}$, 
\begin{equation}\label{nands}
\|H_n^df\|_{L^q (\mathcal C(e_{d+1}, r )) } \lesssim  n^{\frac{d-1}2-\frac dq} \|f\|_p. 
\end{equation}  
Since $f$ is supported  in $\big\{  x\in \mathbb S^{d}:   x_{d+1} \in [-1+100r^2, 1-100r^2 ]\big\}$,  $\xi\cdot\zeta=\cos\theta \in  [-1+85 r^2, 1-85 r^2]$ in \eqref{representation}.  Hence, we only need to consider the case $\theta\in [10 r, \pi- 10 r] $.  

From \eqref{zkernel} and \eqref{asymp2} we only  consider the contribution from the main term 
\[ \mathcal A_\nu (\theta) A_0(\theta) N^{-\nu} J_{\nu}(N\theta) \] 
while $\theta\in [10 r, \pi- 10 r] $.  The contribution from  the other terms  $ \mathcal A_\nu (\theta) A_l(\theta) N^{-\nu-l} J_{\nu+l}(N\theta)$, $1\le l\le m-1 $ can be handled similarly but these give smaller bounds  $n^{\frac{d-1}2-\frac dq-l}$. We recall the  asymptotic expansion of Bessel function  $J_{\nu}$ for large arguments 
\[ J_\nu (r) = r^{-\frac12} e^{ir}  \Big(\sum_{0\le j\le \frac{d+1}{2}} a_j r^{-j} \Big) +  r^{-\frac12} e^{-ir}  \Big(\sum_{0\le j\le\frac{d+1}2} b_j r^{-j} \Big)+O(r^{ -\frac {d+1}2}), \quad r\ge 1. \]
Inserting this in the above, we see that  the main term  is given by 
\[  \mathcal K_\pm ( \theta)=   N^{-\frac{1}2} \widetilde A_{\pm} (\theta) e^{\pm i N\theta} \] 
with smooth $\widetilde A_{\pm}$ which is supported in  $ [10 r, \pi- 10 r]$. As before, contribution from the lower order terms and $O(r^{-\frac {d+1}2})$ are less significant since these give smaller bounds.  Since $C(d,n)\sim n^\frac d2$,  combining the above with \eqref{zkernel}, for \eqref{nands} it suffices to show that 
\begin{equation}
\Big\| \int_{\mathbb S^d}  \widetilde A_\pm (\arccos \zeta\cdot \xi)  e^{\pm i N \arccos \zeta\cdot \xi} f(\xi) d\xi \Big\|_{L^q (\mathcal C(e_{d+1}, r )) }   \lesssim N^{-\frac dq}\|f\|_p, 
\end{equation}
whenever $f$ satisfies \eqref{awaynorthsouth}.

\newcommand{\lr}[1]{\|_{L^{#1}(\mathbb R^d)}}
\newcommand{\Lr}[1]{\Big\|_{L^{#1}(\mathbb R^d)}}

Let $x=(\bar x, v)$, $y=(\bar y,u) \in \mathbb R^{d-1}\times \mathbb R$.  Again by decomposition and {rotation in $\bar y$} we may assume $f$ is supported in the set 
\[ \Big \{(\bar y, \sqrt{1-|\bar y|^2-u^2},  u) :  |\bar y |\le \frac 12\sqrt{1-u^2},  \,  u\in [100 r^2-1, 1-100r^2] \Big \}. \] 
Hence, using the above parametrization and  parameterizing near the north pole with 
\[(\bar x,  v, \sqrt{1-|\bar x|^2-v^2}), \quad  |(\bar x,  v )|\le  r,  \] 
and ignoring the harmless  smooth factors resulted from  parametrization, we are reduced to showing   
\begin{equation} \label{nearequator} 
 \Big\|   \iint  a(\bar x, v,\bar y,u)  e^{i N  \arccos  \phi(\bar x, v, \bar y,u)}  h(\bar y,u)  d\bar y\, du \Lr {q} \lesssim N^{-\frac dq}  \|h\lr p, 
\end{equation} 
where  $a$ is a smooth function  supported in the set $ \{(x,y)\in\R^d\times\R^d:  |\bar y |\le \frac 12\sqrt{1-u^2}, \, u\in [100 r^2-1, 1-100r^2], \, |(\bar x, v)|\lesssim r\}$  and 
\[ \phi( x, y)=\bar x\cdot  \bar y + v \sqrt{1-|\bar y|^2-u^2}   + u\sqrt{1-|\bar x|^2-v^2}.\] 

Fixing $u\in ( 100r^2-1, 1-100 r^2)$, let us set 
\[  \phi_u(\bar x, v,\bar y) =\phi(\bar x, v, \bar y, u).\] 
By Minkowski's inequality, in order to show \eqref{nearequator}, it is sufficient to show that, for $p, q$ satisfying  \eqref{ca-so} and $q>\frac{2(d+2)}{d}$,
\begin{equation}\label{nearequator1} 
 \Big\|   \int  a(\bar x, v,\bar y,u)  e^{i N  \arccos  \phi_u(\bar x, v, \bar y)}  g(\bar y) \, d\bar y \Lr q \le CN^{-\frac dq}  \|g\|_{L^p(\mathbb R^{d-1})}
\end{equation} 
with  $C$ independent of $u$.   

To show \eqref{nearequator1}  by Theorem \ref{osc-est2}  it is sufficient to  check that the phase function $\arccos\phi_u$ satisfies the conditions \eqref{cscon1},  \eqref{cscon2} and \eqref{elliptic}.  We first notice that 
\[  \nabla_{\bar x, v} \arccos \phi_u  =  \frac{-1}{\sqrt{1-(\phi_u)^2}} \Big(\bar y -\frac{u \bar x}{\sqrt{1-|\bar x|^2-v^2}},\,  \sqrt{1-|\bar y|^2-u^2} - \frac{u v}{\sqrt{1-|\bar x|^2-v^2}}\Big).\] 

Also note that  $\sqrt{1-(\phi_u)^2}    \ge r$ since $ u\in (100 r^2-1, 1- 100 r^2)  $. Clearly, the map  $\bar y\to  \nabla_{\bar x, v} \arccos\phi_u(\bar x,  v, \bar y)$ defines an immersed surface. In order to show that it has nonvanishing gaussian curvature everywhere,  by rotational symmetry of the phase function  (rotation and horizontal rotation on $\mathbb S^d$) it is enough to check this by assuming 
\[ (\bar x,  v)=0, \,\, (\bar y, u)=(0, u), \,\, u\in ( 100r^2-1, 1-100 r^2). \] 

Note that $\nabla_{\bar y}  \phi_u=  \bar x - \frac{ v\bar y}{\sqrt{1-|\bar y|^2-u^2}} $. Hence $\nabla_{\bar y} \phi_u(0,0,0)=0$. Using this and  straightforward computation give
\[  \nabla_{\bar y}  \nabla_{\bar x, v} \arccos \phi_u (0, 0, 0)=  -\frac{1}{\sqrt{1-u^2}} \begin{pmatrix} I_{d-1} \\  0\end{pmatrix}. \]
So, the unique vector  $\mathbf v$ satisfying $\nabla_{\bar y} (\mathbf v \cdot \nabla_{\bar x, v} \arccos \phi_u )(0, 0,  0) =0$ is  $\pm e_d$.   We now consider 
\begin{align*}
   \Phi_u(\bar x, v, \bar y)&:=e_d\cdot  \nabla_{\bar x, v}  \arccos \phi_u \\ &\,=  \frac{-1}{\sqrt{1-(\phi_u)^2}}  \times\Big(   \sqrt{1-|\bar  y|^2-u^2} - \frac{u v}{\sqrt{1-|\bar  x|^2-v^2}}  \Big) \\&=:A\times B.
\end{align*}
Now it remains to show that the Hessian matrix $ \partial_{\bar y}^2 \Phi_u(0,0,0) $  is positive definite.  This is easy to verify.  Observe $\nabla_{\bar  y} A(0,0,0)=\nabla_{\bar y} B(0,0,0)=0$. Thus we get 
\[ \partial_{\bar  y}^2  \Phi_u(0,0,0) = A\partial_{\bar y}^2 B(0,0,0) = \frac{1}{1-u^2} I_{d-1} . \] 
This verifies that  the surface $\bar y\to  \nabla_{x} \arccos\phi_u(\bar x,  v, \bar y)$ has positive definite fundamental form. Hence we get the bound \eqref{nearequator1} and we see that the constant $C$ in \eqref{nearequator1} can be taken to be uniform because  the Hessian matrix $\partial_{\bar  y}^2  \Phi_u(0,0,0)$ can be controlled uniformly for $u\in ( 100r^2-1, 1-100 r^2)$.

\subsection{Near the north pole} We now consider the case that  $f$ satisfies \eqref{north}.  For the rest of this section $f$ is assumed to satisfy \eqref{north}. 

We start with observing 
\[  |\mathbf Z_n (\cos\theta)|  \lesssim n^{d-1}, \] 
which is easy to see using \eqref{zkernel}  and \eqref{asymp}. This gives the sharp $L^1$ to $L^\infty$ bound for $H_n^d$.  Using this,  the contribution from  the part of kernel $\theta\lesssim N^{-1}$  is  easy to  handle.  In fact, let $\psi$ be a smooth function supported in  $[-2,2]$ such that $\psi=1$ on  $[-1,1]$. Then we have 
\[  \Big| \int_{\mathbb S^d}  \mathbf Z_n (\zeta\cdot\xi) \psi\Big(\frac{n^2(1-\xi\cdot\zeta)}{100}\Big)    f(\xi) d\sigma(\xi)\Big | \lesssim n^{d-1}\int_{|\xi-\zeta|\le 20n^{-1}}      |f(\xi) | d\sigma(\xi).\] 
From a simple computation (Young's convolution inequality)  we get 
\begin{equation} \label{nearzero} 
\Big \| n^{d-1}\int_{|\xi-\zeta|\le 20n^{-1}}       |f(\xi) | d\sigma(\xi) \Big\|_{L^q(\mathcal C(e_{d+1}, r ))}\lesssim n^{-1+ d(\frac1p-\frac1q)} \|f\sl p. 
\end{equation} 
Since the bound $n^{-1+ d(\frac1p-\frac1q)}$   is acceptable, now we only need to consider the case $ 10 n^{-1}\le \theta\le 50r$. 

For the range $ 10 n^{-1}\le \theta\le 50r$,  we use the asymptotic expansion \eqref{asymp}.  Hence, as before,  it is enough to control the leading term $\mathcal A_\nu (\theta) A_0(\theta) N^{-\nu} J_{\nu}(N\theta)$ in \eqref{asymp2} as the lower order terms can be handled in the same way and these give smaller bounds.  Combining  this, the asymptotic expansion for the Bessel function and  \eqref{zkernel},   it suffices  to consider  zonal  convolution with   
\[   \mathcal Z(\cos\theta)=  N^{\frac{d-1} 2} \mathcal A_\pm(\theta)  e^{\pm iN\theta}, \]
where  $ \mathcal A_\pm $  is smooth and supported in  $(10n^{-1}, 50r)$ with  $  \frac{d^k}{d\theta^k} \mathcal A_\pm(\theta)=O(\theta^{-\frac{d-1}{2} -k}).$ Thus, using the typical dyadic partition of unity,   we may break $\mathcal Z(\cos\theta)$ dyadically such that 
\[    \mathcal Z(\cos\theta)=\sum_{j:  n^{-1}\le  2^{-j }\le  50r }    \mathcal Z_j(\cos\theta):=     N^{\frac{d-1}{2}} \sum_{j:  n^{-1}\le  2^{-j }\le  50r }   2^{\frac{d-1} 2 j} \psi_j(\theta) e^{\pm iN\theta},      \] 
where   $\psi_j(\theta)$ is supported in $[ 2^{-j-1}, 2^{-j+1}]$ and  $\|\psi_j( 2^{-j} \cdot)\|_{C^l([2^{-1}, 2^2])}$ is uniformly bounded for any $l$. Let us set    
\[   T_j f(\zeta)= \int   \mathcal Z_j(\zeta\cdot\xi) f(\xi) d\sigma(\xi). \]      
We claim that, for $p$, $q$ satisfying \eqref{ca-so} (and $f$ satisfying \eqref{north}) and $q>\frac{2(d+2)}d$, 
\begin{equation}\label{tj}
\|T_j f\|_{L^q(\mathcal C(e_{d+1}, r ))}  \lesssim  N^{\frac{d-1}{2}-\frac dq} 2^{(\frac dp -\frac{d+1}2)j} \|f\sl p. 
\end{equation}
 This gives, for  $p$, $q$ satisfying \eqref{ca-so}, 
\[ \Big\| \sum_{j:  n^{-1}\le  2^{-j }\le  50r } T_jf  \Big\|_{L^q(\mathcal C(e_{d+1}, r ))} \lesssim  \begin{cases}     N^{\frac{d-1}2-\frac dq}  \|f\sl p,  &    \frac{2d}{d+1} <  p,  \, q>\frac{2(d+2)}d , \\   
 N^{d(\frac 1p-\frac1q)-1}  \|f\sl p,  & 1\le p<   \frac{2d}{d+1}.
 \end{cases}
\]

For the critical case $p=\frac{2d}{d+1}$, we use a simple summation  lemma which was implicit in \cite{B}. A statement for a general multilinear setting can also be found in \cite{CKLS}.
\begin{lem}\label{interpol}  
Let $\e_0, \, \e_1 > 0$, and let $\{\mathcal T_l:l\in\Z\}$ be a sequence of linear operators satisfying
\[
\|\mathcal  T_l f\|_{q_0} \le M_0 2^{\e_0{l}}\|f\|_{p_0}, \quad \|\mathcal  T_l f\|_{q_1} \le M_ 1 2^{-\e_1{l}}\|f\|_{p_1}
\]
for some $1\le p_0,\, p_1,\, q_0,\, q_1\le \infty$. Then $\| \sum_l\mathcal  T_l f\|_{q,\infty} \le C M_0^\theta M_1^{1-\theta} \|f\|_{p,1},$ where $\theta =\e_1/(\e_0+\e_1)$, $1/q=\theta/q_0+(1-\theta)/q_1$, $1/p = \theta / p_0 +(1-\theta) / p_1$.  Additionally, if $q_0=q_1$, then  $\|\sum_l\mathcal T_l f\|_{q} \le C M_0^\theta M_1^{1-\theta} \|f\|_{p,1}.$
\end{lem}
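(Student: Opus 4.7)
The plan is to prove the lemma by the standard Bourgain-style summation trick, combined with the duality characterization of $L^{p,1}\to L^{q,\infty}$ boundedness via restricted weak type estimates. After absorbing $M_0,M_1$ into the operators (so that without loss of generality $M_0=M_1=1$), it suffices to establish the restricted weak type $(p,q)$ inequality
\[ \Big|\Big\{x:\Big|\sum_l \mathcal T_l \chi_E(x)\Big|>\alpha\Big\}\Big|\lesssim |E|^{q/p}/\alpha^q \]
for every measurable set $E$ and every $\alpha>0$, since this is equivalent (for sublinear operators) to the $L^{p,1}\to L^{q,\infty}$ bound asserted in the lemma.

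The main step is to pick an integer threshold $l_0=l_0(\alpha,|E|)$ and split $\sum_l \mathcal T_l\chi_E=\sum_{l\le l_0}\mathcal T_l\chi_E+\sum_{l>l_0}\mathcal T_l\chi_E$. The geometric growth $2^{\e_0 l}$ in the first hypothesis sums on $\{l\le l_0\}$ to give $\|\sum_{l\le l_0}\mathcal T_l\chi_E\|_{q_0}\lesssim 2^{\e_0 l_0}|E|^{1/p_0}$, while the decay $2^{-\e_1 l}$ in the second sums on $\{l>l_0\}$ to give $\|\sum_{l>l_0}\mathcal T_l\chi_E\|_{q_1}\lesssim 2^{-\e_1 l_0}|E|^{1/p_1}$. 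Two applications of Chebyshev's inequality then yield
\[ \Big|\Big\{\Big|\sum_l \mathcal T_l\chi_E\Big|>\alpha\Big\}\Big|\lesssim \Big(\tfrac{2^{\e_0 l_0}|E|^{1/p_0}}{\alpha}\Big)^{q_0}+\Big(\tfrac{2^{-\e_1 l_0}|E|^{1/p_1}}{\alpha}\Big)^{q_1}, \]
and I would choose $l_0$ to balance the two summands, which is an elementary one-variable minimization.

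The only real work is bookkeeping: with the balanced choice of $l_0$, one must check that the exponents of $|E|$ and $\alpha$ on the right collapse to $q/p$ and $-q$ respectively. This works out precisely because the compatibility $\e_0\theta=\e_1(1-\theta)$ is built into $\theta=\e_1/(\e_0+\e_1)$, so no input beyond linear algebra is needed. For the refined statement with $q_0=q_1$, the low and high pieces live in the \emph{same} space, so the Chebyshev step is bypassed: the triangle inequality directly gives $\|\sum_l\mathcal T_l f\|_{q_0}\le 2^{\e_0 l_0}\|f\|_{p_0}+2^{-\e_1 l_0}\|f\|_{p_1}$, and minimizing over $l_0\in\mathbb R$ yields $\|\sum_l\mathcal T_l f\|_{q_0}\lesssim\|f\|_{p_0}^\theta\|f\|_{p_1}^{1-\theta}$. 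Applied to $f=\chi_E$ this is exactly restricted strong type $(p,q)$, which is equivalent to the $L^{p,1}\to L^q$ bound stated.
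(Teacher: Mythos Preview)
The paper does not actually supply a proof of this lemma; it states it as a known ``simple summation lemma'' implicit in Bourgain \cite{B} (with a multilinear version in \cite{CKLS}, and a related statement in \cite[Lemma~2.6]{Le-circle}) and proceeds directly to use it. Your argument is the standard Bourgain splitting-and-optimizing trick that the paper is alluding to, and it is carried out correctly: the restricted weak type reformulation of the $L^{p,1}\to L^{q,\infty}$ bound, the split at $l_0$, the two geometric summations, Chebyshev, and the optimization all work exactly as you describe, and the exponent bookkeeping does collapse to $|E|^{q/p}/\alpha^q$ precisely because $\theta=\varepsilon_1/(\varepsilon_0+\varepsilon_1)$ and $1/q,1/p$ are the corresponding convex combinations. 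The $q_0=q_1$ refinement via the triangle inequality and restricted strong type is also correct. One cosmetic point: you optimize over $l_0\in\mathbb R$ but must take $l_0\in\mathbb Z$; rounding costs only a harmless constant, so this is fine.
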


Hence, by Lemma  \ref{interpol} and \eqref{tj} we have 
\begin{gather*}  
\Big\| \sum_{j:  n^{-1}\le  2^{-j }\le  50r } T_jf  \Big\|_{L^q(\mathcal C(e_{d+1}, r ))} \lesssim    N^{\frac{d-1}2-\frac dq}  \|f\|_{L^{\frac{2d}{d+1} , 1}(\mathbb S^d)} ,   \quad  q> \frac{2d(d+1)} {(d-1)^2}, \\   
\Big \| \sum_{j:  n^{-1}\le  2^{-j }\le  {50r} }  T_jf  \Big\|_{L^{q,\infty}(\mathcal C(e_{d+1}, r ))}  \lesssim   N^{\frac{d-1}2-\frac dq}  \|f\|_{L^{\frac{2d}{d+1} , 1}(\mathbb S^d)} ,  \quad q= \frac{2d(d+1)} {(d-1)^2} .  
\end{gather*} 

Now we can repeat the same argument with  $ \mathcal A_\nu (\theta) A_l(\theta) N^{-\nu-l} J_{\nu+l}(N\theta)$, $1\le l\le m-1,$ and the contributions from these terms are controlled by smaller norms.  Hence combining this with \eqref{nearzero} we conclude that for  $p$, $q$ satisfying \eqref{ca-so}, 
\begin{equation}\label{hh1} 
\| H_n^d f \|_{L^{q}(\mathcal C(e_{d+1}, r ))}   \lesssim \begin{cases} 
N^{\frac{d-1}2-\frac dq} \|f\sl p \,\, &\frac{2d}{d+1} <  p,  \, q>\frac{2(d+2)}d, \\  
N^{d(\frac 1p-\frac1q)-1} \|f\sl p \,\, &1\le p< \frac{2d}{d+1} , 
\end{cases} 
\end{equation} 
and 
\begin{equation}\label{hh2} 
\begin{aligned}  
\| H_n^d f \|_{L^{q}(\mathcal C(e_{d+1}, r ))}  &\lesssim    N^{\frac{d-1}2-\frac dq}   \|f\|_{L^{\frac{2d}{d+1} , 1}(\mathbb S^d)}  ,   & q> \frac{2d(d+1)} {(d-1)^2}, \\   
\| H_n^d f  \|_{L^{q,\infty}(\mathcal C(e_{d+1}, r ))}     &  \lesssim   N^{\frac{d-1}2-\frac dq}   \|f\|_{L^{\frac{2d}{d+1} , 1}(\mathbb S^d)} ,  & q= \frac{2d(d+1)} {(d-1)^2} .  
\end{aligned} 
\end{equation}
 
\begin{proof}[Proof of Theorem \ref{k-proj}] Finally, combining these  estimates  \eqref{hh1} and \eqref{hh2}, with  \eqref{nands} and using rotational symmetry, we see that the same bounds are true for $H_n^d$ by replacing  $\mathcal C(e_{d+1}, r )$ with $\mathbb S^d$  without restriction on $f$ for $p$, $q$ satisfying \eqref{ca-so}. Since the estimates for $(p,q)=(2,2)$ and $(p,q)=(1,\infty)$ are trivially true,  interpolation and duality  yield all the bounds in Theorem \ref{k-proj}. 
\end{proof}  
 
\begin{proof}[Proof of \eqref{tj}]  We now use the  parametrization   $ \kappa (x)=(x, \sqrt{1-|x|^2})$ of the sphere near $e_{d+1}$ for both $\zeta$ and $\xi$.   %
Let us set 
\[  \phi(x,y)= x\cdot y+ \sqrt{(1-|x|^2)(1-|y|^2)}.\] 
Then, we may write
\[  T_jf (x)=   N^{\frac{d-1}{2}}2^{\frac{d-1} 2 j}  \int    \psi_j(\arccos   \phi(x,y) ) e^{iN\arccos   \phi(x,y)}   f(\kappa(y) )w(y)  dy, \]
where $w(y)=\sqrt{1+ \frac{ |y|^2}{1-|y|^2}}$.    By rescaling  $(x,y)\to 2^{-j} (x,y)$ and  recalling the definition of  $\Phi_\epsilon$  (in Lemma \ref{converge}) we see that 
\[ T_jf(2^{-j} x)= N^\frac{d-1}2   2^{-\frac{d+1}2 j} \int    \psi_j(2^{-j}  \Phi_{2^{-j}}(x,y) )  e^{i2^{-j}N\Phi_{2^{-j}}(x,y) }f(\kappa(2^{-j}y) ) w(2^{-j}y)  dy. \] 
By Lemma  \ref{converge} we see that  the functions $\psi_j(2^{-j}  \Phi_{2^{-j}}(x,y) )$ are nonzero only if $|x-y|\in [2^{-2}, 2^{2}]$ and are uniformly bounded  in $C^M(\R^d\times \R^d)$ for any $M$ provided that  $j$ is large enough so that $2^{-j}\le 50 r\le \epsilon_0/2$.  For $n^{-1} \le 2^{-j}\le 50 r$,  we set 
\[ \widetilde T_j g (x)=   \int     \psi_j(2^{-j}  \Phi_{2^{-j}}(x,y) )  e^{i2^{-j}N\Phi_{2^{-j}}(x,y) } g(y)   dy.\] 
Discarding some harmless factors which arise from the parametrization, we  see  \eqref{tj} is equivalent to  the estimate
\begin{equation}
\|  \widetilde T_j f\|_q\lesssim  (2^{-j}N)^{-\frac dq}  \|f\|_p
\end{equation} 
for $p$, $q$ satisfying \eqref{ca-so}.  This follows from Proposition \ref{osc} and our choice of $r$. 
\end{proof}

\section{Lower bound for $\hpqnorm$: Proof of Theorem \ref{nec}} \label{necpr}
In this section we prove  the lower bound for $\hpqnorm$ in Theorem \ref{nec} by  testing the equality $\| H^d_n f \sl{q}\le C\|f \sl{p}$ with various input functions $f$. 
For $p$, $q$ in a certain range, this can be done using spherical harmonic functions of  degree $n$, but  for general $p$, $q$  we need to analyze  the  integral operator  $H_n^d$ directly using \eqref{zkernel} and \eqref{asymp}. 

In order to show \eqref{lower}, by duality it is enough to show  that 
\begin{align}
& \hpqnorm \gtrsim  n^{\frac{d-1}2 -\frac dq},    \label{bd3} \\   
&  \hpqnorm \gtrsim n^{\frac{d-1}2(\frac1p-\frac1q)},  \label{bd1} \\
&  \hpqnorm \gtrsim n^{d(\frac1p-\frac1q) -1} . \label{bd2}
\end{align}

\subsubsection*{Proof of  \eqref{bd3}}  On the limited range $p<\frac{2d}{d-1} <q$, \eqref{bd3} can be shown with the following estimate from  Szeg\"o  {\cite[p. 391]{Sz1}}. 
 
\begin{lem} 
For $\alpha, \beta, \mu>-1$, and   $p>0$, 
\[ \int_0^1(1-t)^\mu |P^{(\alpha, \beta)}_n (t)|^p dt  \sim  
\begin{cases}   n^{\alpha p-2\mu-2}, & 2\mu<  \frac{2\alpha+1}2 p-2, \\ 
n^{-\frac p2} \log n,  & 2\mu=\frac{2\alpha+1}2 p-2, \\ 
n^{-\frac p2}, & 2\mu>  \frac{2\alpha+1}2 p-2. 
\end{cases} 
\] 
\end{lem}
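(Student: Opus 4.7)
The plan is to change variables $t = \cos\theta$ and apply the Frenzen--Wong expansion (Theorem \ref{FW}) of $P_n^{(\alpha,\beta)}(\cos\theta)$ in terms of Bessel functions. Since $1-t = 2\sin^2(\theta/2)\sim\theta^2$ and $dt\sim\theta\,d\theta$ near $\theta=0$, and since the factors $\sin(\theta/2)$, $\cos(\theta/2)$, $\theta/\sin\theta$ are all bounded above and below on $[0,\pi/2]$, the integral is comparable to $\int_0^{\pi/2}\theta^{2\mu+1}|P_n^{(\alpha,\beta)}(\cos\theta)|^p\,d\theta$. Retaining the leading term of Theorem \ref{FW} (the remainders $A_l(\theta)N^{-\alpha-l}J_{\alpha+l}(N\theta)$ for $l\ge 1$ carry strictly smaller powers of $n$) and using $\Gamma(n+\alpha+1)/n!\sim n^\alpha$ together with $N\sim n$, we obtain $|P_n^{(\alpha,\beta)}(\cos\theta)|\sim \theta^{-\alpha}|J_\alpha(N\theta)|$.

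Next I split the $\theta$-integral at the natural transition $\theta\sim 1/n$ where the Bessel function changes character. For $0\le\theta\le 1/n$, the small-argument asymptotic $J_\alpha(r)\sim r^\alpha$ yields $|P_n^{(\alpha,\beta)}(\cos\theta)|\sim N^\alpha$, so this part contributes
\[
n^{\alpha p}\int_0^{1/n}\theta^{2\mu+1}\,d\theta\ \sim\ n^{\alpha p-2\mu-2}.
\]
For $1/n\le\theta\le\pi/2$, the large-argument expansion $J_\alpha(r)=\sqrt{2/(\pi r)}\,\cos(r-\alpha\pi/2-\pi/4)+O(r^{-3/2})$ gives the pointwise upper bound $|J_\alpha(N\theta)|\lesssim(N\theta)^{-1/2}$, hence this part contributes at most
\[
n^{-p/2}\int_{1/n}^{\pi/2}\theta^{2\mu+1-p(\alpha+1/2)}\,d\theta.
\]

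Setting $\sigma := 2\mu+2-\tfrac{2\alpha+1}{2}p$, the three cases of the lemma correspond precisely to the three possibilities for evaluating this last integral: if $\sigma<0$ (i.e. $2\mu<\tfrac{2\alpha+1}{2}p-2$), the integral is $\sim n^{-\sigma}$, giving $n^{-p/2}\cdot n^{-\sigma}=n^{\alpha p-2\mu-2}$, which also matches the first regime; if $\sigma=0$ the integral is $\sim\log n$, giving $n^{-p/2}\log n$; and if $\sigma>0$ the integral is $O(1)$, giving $n^{-p/2}$, which dominates the first-regime contribution $n^{\alpha p-2\mu-2}$ precisely in this case. Thus the three cases of the lemma arise from comparing the two regimes.

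The main obstacle is upgrading the pointwise upper bound on $|J_\alpha|$ to a matching lower bound in $L^p$: since $J_\alpha$ oscillates, $|J_\alpha(N\theta)|$ vanishes along a discrete set, but one recovers a two-sided estimate by integrating over any sub-interval of length comparable to the period $\pi/N$, on which $|\cos(N\theta-\alpha\pi/2-\pi/4)|^p$ has a positive average, and then summing. Care is needed to absorb the $O((N\theta)^{-3/2})$ remainder as well as the higher-order terms $A_l(\theta)N^{-\alpha-l}J_{\alpha+l}(N\theta)$ from Theorem \ref{FW}; both contribute only lower-order powers of $n$ to the integral and hence do not affect the stated asymptotics.
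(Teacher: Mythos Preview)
The paper does not prove this lemma itself; it simply cites Szeg\H{o} \cite[p.~391]{Sz1} and uses the result as a black box. Your argument via the Frenzen--Wong expansion (Theorem~\ref{FW}) together with the small- and large-argument Bessel asymptotics is essentially the standard route to this estimate, and the computation of the three regimes through the parameter $\sigma=2\mu+2-\tfrac{2\alpha+1}{2}p$ is correct. The treatment of the lower bound by averaging $|\cos(N\theta-\alpha\pi/2-\pi/4)|^p$ over periods of length $\pi/N$ and absorbing the $O((N\theta)^{-3/2})$ and higher-$l$ remainders is also the right idea.

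One caveat: Theorem~\ref{FW} as quoted requires $\alpha>-1/2$ and $\alpha+\beta\ge -1$, whereas the lemma is stated for all $\alpha,\beta>-1$. So your argument, as written, proves a slightly narrower statement than the one cited from Szeg\H{o}. This does not matter for the paper's application, which only uses the case $\alpha=\beta=\mu=\tfrac{d-2}{2}\ge 0$, but you should either note the restriction or invoke a version of the Jacobi asymptotics valid down to $\alpha>-1$ (e.g., Szeg\H{o}'s Darboux-type formula) if you want the lemma in full generality.
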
 
 
In particular,  taking $\alpha=\beta=\mu=\frac{d-2}2$, we see that, for $e\in \mathbb S^d$, 
\[ \| P_n^{(\frac{d-2}2, \frac{d-2}2)} (\xi\cdot e) \sl{p }\sim 
\begin{cases}   n^{\frac{d-2}2-\frac dp}, & p> \frac{2d}{d-1} , \\ 
n^{-\frac 12} (\log n)^\frac1p, & p=\frac{2d}{d-1}, \\ 
n^{-\frac 12}, & p< \frac{2d}{d-1}. 
\end{cases}
\]
Testing the inequlity with $f(\xi)=P_n^{(\frac{d-2}2, \frac{d-2}2)} (\xi\cdot e)$ which is in $\mathcal H_n^d$ gives that,  for  $p<\frac{2d}{d-1} <q$,  $ \|H_n^d\|_{p, q}\gtrsim  n^{\frac{d-1}2-\frac dq}.$ 

To get the condition on the full range, we need to consider  the kernel of  the operator $H_n^d$. For this, using \eqref{asymp} and the asymptotic expansion of the Bessel function it is easy  to see that, for $N\gg 1$, 
\[ P_n^{(\frac{d-2}2, \frac{d-2}2)}(\cos\theta)= \frac{2^\frac{d-1}{2}\Gamma(n+\frac d2)}{\sqrt\pi n!} N^{-\frac{d-1}2} (\sin\theta)^{-\frac{d-1}2} \Big( \cos\Big( N\theta-\frac{(d-1)\pi}4 \Big)+ O(N^{-1})\Big)\] 
provided that $\theta$ is away from zero, say, $\theta\in [\pi/6,  \pi/2]$.  Combining this with \eqref{zkernel}, we have that, if  $0 \le \zeta \cdot\xi \le \sqrt3/2 $, 
\begin{equation}\label{zonal} 
\mathbf Z_n(\zeta\cdot\xi) =  C(N) G(\arccos \zeta\cdot\xi) \Big( \cos\Big( N\arccos \zeta\cdot\xi -\frac{(d-1)\pi}4\Big)+ O(N^{-1})\Big)
\end{equation}
with $C(N)\sim N^\frac{d-1}2$ and $G\sim 1$.  Hence, if $|\zeta-e_{d+1}|\le c N^{-1}$ for a small constant $c>0$  and  $1/10\le \zeta \cdot\xi \le \sqrt3 /2$,  then
\begin{equation}\label{zon2}
\mathbf Z_n(\zeta\cdot\xi) =C(N) G(\arccos \zeta\cdot\xi)  \Big( \cos\Big( N\arccos e_{d+1}\cdot\xi -\frac{(d-1)\pi}4 +O(c) \Big)+ O(N^{-1})\Big).
\end{equation} 
Let us set 
\[\mathlarger\Sigma =\bigcup_{k\in \mathbb Z: \frac{N}{12}\le k\le \frac{N}{6} }  \Big\{\xi\in \mathbb S^d:  N \arccos (e_{d+1}\cdot\xi) -\frac{(d-1)\pi}4\in  [2\pi k ,\frac \pi 4 +2\pi k   ]  \Big   \} \]   
and set  $ f(\xi)= \mathbf 1_{\Sigma}(\xi). $  Then it is easy to see that  $\|f\|_p\sim 1$  for any large $N$ and $p\in [1,\infty]$. By \eqref{zon2}, we see that   $\mathbf Z_n(\zeta\cdot\xi)\sim N^{\frac{n-1}2}$ if $|\zeta-e_{d+1}|\le c N^{-1}$ and $\xi \in \Sigma$.  Hence, for  $|\zeta-e_{d+1}|\le c N^{-1}$, 
\[ H_n^df(\zeta)=\int_{\mathbb S^d}  \mathbf Z_n(\zeta\cdot\xi)   f(\xi) d\sigma(\xi) \sim    N^\frac{d-1}2.\] 
This implies that   $\| H_n^df\sl q\gtrsim N^{\frac{d-1}2 -\frac dq}$   while   $\|f\sl p\sim1$. So, we get  $ \| H_n^d\|_{p,q}\gtrsim N^{\frac{d-1}2 -\frac dq}.$

\subsubsection*{Proof of  \eqref{bd1}}  This can be shown by making use of Gaussian beam. 
Let us consider  
\[  h_n(\eta)= (\eta_1+ i\eta_2)^n, \quad \eta=(\eta_1, \eta_2, \eta' ) \in \mathbb S^d, \quad  \eta_1,\, \eta_2 \in \mathbb R .\] 
Then, clearly, $H_n^dh_n=h_n$ and it is well known that $ \|h_n\|_{L^p(\mathbb S^d)}  \sim   n^{-\frac{d-1}{2p}}.$  In fact, $|h_n(\eta)|=  e^{n\ln \sqrt{ \eta_1^2+\eta_2^2}  } =e^{\frac n2\ln ( 1-|\eta'|^2)} \sim e^{-\frac{n}{2} {\text{dist} (\gamma, \eta)^2} }$.  Here $\gamma$ is the great circle which  is contained in $\eta'=0$. Hence it follows that $\|H_n^d\|_{p,q}\gtrsim n^{\frac{d-1}2(\frac1p-\frac1q)}$.

\subsubsection*{Another proof of  \eqref{bd1}}  It is also possible to show this directly without using special spherical harmonics. We make use of \eqref{zonal} and the parametrizations of $\mathbb S^d$ near $e_2$ and $e_1$, respectively, 
\[ \zeta=\Big(x_1, \sqrt{1-x_1^2-|\tilde x|^2},  \tilde x\Big),\,\,   \xi=\Big (\sqrt{1-y_2^2-|\tilde y|^2}, y_2,  \tilde y\Big),\,\, |x_1|,\, |y_2|\le c, \,\,  |\tilde x|,\, |\tilde y|\le cN^{-\frac12} \]
with a small enough $c>0$. Then, we have 
\[ \zeta\cdot \xi= x_1\sqrt{1-y_2^2}+ y_2\sqrt{1-x_1^2} + O(c^2N^{-1}).\]  
Putting $x_1=\cos\theta$ and $y_2=\sin\theta'$ with $|\theta-\pi/2|\le 2c$ and $|\theta'|\le2c$,  we have from \eqref{zonal}
\begin{equation}\label{decomp}
\mathbf Z_n(\zeta\cdot\xi)=C(N)  G(\arccos \zeta\cdot\xi)  \Big(\cos\Big( N(\theta-\theta')-\frac{(d-1)\pi}4+O(c^2) \Big)+ O(N^{-1})\Big)
\end{equation} 
because $\zeta\cdot\xi=\cos(\theta-\theta')+O(c^2N^{-1})$ and $\arccos(s+t)=\arccos s+ O(t)$. With a small enough $c>0$, set 
\[ {\mathlarger\Sigma}=\bigcup_{l\in \mathbb Z : |l|\le \frac{cN}{4\pi}}\Big\{ (\theta', \tilde y):  \theta' \in [-\frac cN, \frac cN] + \frac{2\pi l}N, \, |\tilde y|\le cN^{-\frac12}\Big\}.  \] 
Let us define a function $f$ on the sphere by setting $f(\sqrt{\cos^2\theta'-|\tilde y|^2}, \sin\theta', \tilde y)= \mathbf 1_\Sigma(\theta', \tilde y)$, and integrate $|H_n^df(\cos\theta, \sqrt{\sin^2\theta-|\tilde x|^2}, \tilde x)|^q$ over $(\theta, \tilde x)\in \Sigma+\frac{\pi}2$. If we choose $N\gg 1$ so that $2N-d+1\in 4\N$ and ${c>0}$ small enough, then in \eqref{decomp} we see that
\[\Big| \cos\Big( N(\theta-\theta')-\frac{(d-1)\pi}4+O(c^2) \Big)+ O(N^{-1})\Big|\gtrsim 1. \]
So,  $\|H_n^d f\sl q\gtrsim N^{-\frac{d-1}{2q}}$ while $\|f\sl p\sim N^{-\frac{d-1}{2p}}$. Hence, the desired  \eqref{bd1} follows.

\subsubsection*{Proof of \eqref{bd2}}   Using the fact that   $\widetilde J_k(r)=r^{-k}J_k(r)$  is a well defined analytic function and $\widetilde J_k(0)>0$, $k\ge 0$, it is easy to see  from \eqref{asymp} that $P_n^{(\frac{d-2}2, \frac {d-2}2)}(\cos\theta)\gtrsim n^{\frac{d-2}2}$ if $0\le \theta <cN^{-1}$ with $c$ small enough.   Hence, by \eqref{zkernel} we have that,  for $0<\theta\le cN^{-1}$, 
\[\mathbf Z_n(\theta)\gtrsim N^{d-1} .\]
Hence if we consider the function $f=\mathbf 1_{|\xi-e_{d+1}|\ll N^{-1}}$, then  $H_n^df(\zeta)\gtrsim N^{-1}$ if $|\zeta- e_{d+1}|\ll N^{-1}$.  This gives $ \| H_n^df\sl q/ \| f\sl p\gtrsim   N^{d(\frac1p-\frac1q)-1}$ and \eqref{bd2}.

We now show \eqref{intervalrs} when  $\pqpair\in [S,R]\cup  [S',R']$.  For this we need the following which  is an extension of  a lemma  in Sogge \cite{So}.
\begin{lem} \label{limit} 
Let $p, q\in [1,\infty]$.  Suppose that   there exists a constant $B$ such that 
\begin{equation}\label{assume} 
\hpqnorm \le  B\, n^{d(\frac1p-\frac1q)-1}.
\end{equation} 
Then  we have, for $f\in \mathcal S(\mathbb R^d)$, 
\begin{equation}\label{rest-ext}
\Big\| \int_{\mathbb S^{d-1}}  e^{ i x \cdot \eta }  \widehat  f(\eta) \, d\sigma(\eta) \Big\|_{L^q(\mathbb R^d)} \lesssim  B \|f\|_{L^p(\mathbb R^d)}.
\end{equation}
\end{lem}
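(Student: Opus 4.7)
The approach is a rescaling/blow-up argument which turns $H_n^d$, after zooming in on a cap of radius $\sim N^{-1}$ about the north pole of $\mathbb{S}^d$, into the extension operator $Ef(x) := \int_{\mathbb{S}^{d-1}}e^{ix\cdot\eta}\widehat{f}(\eta)\,d\sigma(\eta) = f\ast \widehat{d\sigma}(x)$, where $\widehat{d\sigma}$ denotes the Fourier transform of the surface measure on $\mathbb{S}^{d-1}\subset\mathbb{R}^d$. Fix $f\in\mathcal{S}(\mathbb{R}^d)$ and a cutoff $\chi\in C_c^\infty(\mathbb{R})$ with $\chi\equiv 1$ on $[0,1/4]$ and $\supp\chi\subset[0,1/2]$. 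Parametrize $\mathbb{S}^d$ near $e_{d+1}$ by $\kappa(y):=(y,\sqrt{1-|y|^2})$, and for each $N=n+(d-1)/2$ define the test function
\begin{equation*}
g_N(\kappa(y)):=N^{d/p}\,\chi(|y|)\,f(Ny).
\end{equation*}
A change of variables $y=w/N$ together with dominated convergence gives $\|g_N\|_{L^p(\mathbb{S}^d)}\to\|f\|_{L^p(\mathbb{R}^d)}$ as $N\to\infty$.

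The heart of the proof is to identify the pointwise limit of the rescaled projection
\begin{equation*}
u_N(z):=N^{1-d/p}\,H_n^d g_N\bigl(\kappa(z/N)\bigr),\qquad z\in\mathbb{R}^d.
\end{equation*}
Substituting $y=w/N$ into \eqref{representation} yields
\begin{equation*}
u_N(z)=\int_{\mathbb{R}^d} N^{1-d}\,\bZ_n\bigl(\kappa(z/N)\cdot\kappa(w/N)\bigr)\,f(w)\,\chi(|w|/N)\,J(w/N)\,dw,
\end{equation*}
where $J(w/N)$ is the Jacobian of $d\sigma$ in these coordinates. The expansion $\kappa(z/N)\cdot\kappa(w/N)=1-|z-w|^2/(2N^2)+O(N^{-4})$ combined with $\arccos(1-t)=\sqrt{2t}\,(1+O(t))$ gives $\arccos\bigl(\kappa(z/N)\cdot\kappa(w/N)\bigr)=|z-w|/N+O(N^{-3})$, so the zonal kernel is evaluated at $\theta=|z-w|/N$. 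Plugging this into the Jacobi expansion \eqref{asymp}, together with the small-$\theta$ behaviour $\mathcal{A}_\nu(\theta)\sim n^{(d-2)/2}\,2^{(d-2)/2}\,\theta^{-(d-2)/2}$ and the large-$n$ asymptotic $C(d,n)\sim c\,n^{d/2}$, produces the crucial convergence
\begin{equation*}
N^{1-d}\,\bZ_n\bigl(\cos(|z-w|/N)\bigr)\longrightarrow c_d\,\widehat{d\sigma}(z-w),\qquad \widehat{d\sigma}(x)=(2\pi)^{d/2}|x|^{-(d-2)/2}J_{(d-2)/2}(|x|).
\end{equation*}
Together with the uniform majorant $|N^{1-d}\bZ_n(\cos(r/N))|\lesssim\min(1,r^{-(d-1)/2})$ and the Schwartz decay of $f$, dominated convergence then yields $u_N(z)\to c_d\,Ef(z)$ pointwise in $z$.

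To finish, I would lower-bound the $L^q$ norm of $H_n^d g_N$ by restricting to the image of a compact $K\Subset\mathbb{R}^d$ under $z\mapsto\kappa(z/N)$:
\begin{equation*}
\|H_n^d g_N\|_{L^q(\mathbb{S}^d)}^{\,q}\ge\int_K N^{-d}\bigl|H_n^d g_N(\kappa(z/N))\bigr|^q\,dz=N^{(d/p-1)q-d}\int_K|u_N(z)|^q\,dz.
\end{equation*}
Since $(d/p-1)q-d=q[d(1/p-1/q)-1]$, the $N$-factor matches exactly the prefactor from the hypothesis $\|H_n^d g_N\|_{L^q}\le Bn^{d(1/p-1/q)-1}\|g_N\|_{L^p}$. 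Dividing through, applying Fatou's lemma to obtain $c_d\|Ef\|_{L^q(K)}\lesssim B\|f\|_{L^p}$, and then sending $K\uparrow\mathbb{R}^d$ yields \eqref{rest-ext}.

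The main technical obstacle is the pointwise asymptotic for $\bZ_n$ in the transition regime $\theta\sim N^{-1}$, where the large-argument expansion of $J_{(d-2)/2}$ exploited in \eqref{zonal} is not available and one must retain the Bessel function itself while tracking the prefactors $C(d,n)\,\mathcal{A}_\nu(\theta)\,N^{-\nu}$ carefully. Establishing the uniform majorant $\min(1,|z-w|^{-(d-1)/2})$ is a secondary matter; it follows from the crude bound $|\bZ_n|\lesssim n^{d-1}$ near $\theta=0$ combined with the standard Bessel asymptotic for $\theta\gg N^{-1}$. Once these asymptotic inputs are secured, the remainder of the argument is soft bookkeeping of the scaling powers and a Fatou-type $L^q$-lower-semicontinuity.
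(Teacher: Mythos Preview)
Your argument is correct and follows essentially the same scaling-limit strategy as the paper: zoom in near a point of $\mathbb S^d$, invoke the Mehler--Heine asymptotic $N^{1-d}\bZ_n(\cos(r/N))\to c_d\,|r|^{-(d-2)/2}J_{(d-2)/2}(r)=\widetilde c_d\,\widehat{d\sigma}(r)$, and match powers of $N$. The paper packages this slightly differently---using the stereographic projection $\mu_n:\mathbb R^d\to n\mathbb S^d$ rather than the graph map $\kappa$, and passing to the limit via a bilinear pairing $I_n=\langle H_n^d(f\circ\mu_n^{-1}(n\,\cdot)),\,g\circ\mu_n^{-1}(n\,\cdot)\rangle$ with a second Schwartz function $g$ (followed by duality) in place of your direct Fatou-on-$K$ argument---but these are cosmetic implementation choices.
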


The Bochner-Riesz operator $R^{\alpha}$ of  order $\alpha$ is the multiplier operator defined by
\[ \widehat {R^{\alpha} f} (\xi) =\frac{1}{\Gamma(\alpha+1)} (1-|\x|^2)_+^\alpha \widehat f(\xi), \quad \xi\in \mathbb R^d. \]
The definition  $R^{\alpha} f$  can be extended   to $\alpha\le -1$ by analytic continuation from the above formula. $L^p$--$L^q$ boundedness for  the Bochner-Riesz operator of negative order has been studied {by} some authors \cite{Boj, So, Carbery-Soria, Bak, CKLS}, and  it was shown by B\"orjeson \cite{Boj} (also see \cite{Bak}) that $R^{\alpha}$, $\alpha<0$,  is bounded from $L^p$ to $L^q$ only if 
\[  \frac1p>\frac{d-1-2\alpha}{2d},\quad   \frac1q<\frac{d+1+2\alpha}{2d} , \quad  \frac1p-\frac1q\ge \frac{-2\alpha}{d+1} . \] 
The problem of  $L^p$--$L^q$ boundedness of $R^{\alpha}$ is completely settled in $\mathbb R^2$ and in higher dimensions the sharp boundedness is established for $\alpha<-\frac{d^2-d-2}{2(d^2+d-2)}$ (see \cite{Bak, CKLS}).

\subsubsection*{Proof of \eqref{intervalrs} for $\pqpair\in [S,R]\cup  [S',R']$}  Since $\frac{1}{\Gamma(\alpha+1)} t_+^\alpha$ equals  the delta distribution when $\alpha=-1$,  it follows that 
$R^{-1} f(x)= \frac12 \int_{\mathbb S^{d-1}}  e^{ i x \cdot \eta }  \widehat  f(\eta) \, d\sigma(\eta)$.  Hence,  from the above we see that \eqref{rest-ext} is possible only for $p$, $q$ satisfying $\frac1p >\frac{d+1}{2d},$ $ \frac1q< \frac{d-1}{2d},$ $\frac1p-\frac1q > \frac{2}{d+1}.$ This is equivalent to $\pqpair\in \mathfrak T_2\setminus ([S,R]\cup[S',R'])$. Now let   $\pqpair\in  ([S,R]\cup[S',R'])$ and suppose  $ n^{-\gamma(p,q)} \hpqnorm\le B$ for some constant $B$.    Then, by Lemma   \ref{limit} the estimate \eqref{rest-ext} should also be true. This contradicts aforementioned  necessary condition.  Hence, there is no $B$ such that  \eqref{assume} holds when $\pqpair\in [S,R] \cup [S', R']$. This gives \eqref{intervalrs}. \qed 

\begin{proof} [Proof of Lemma \ref{limit}] 
Let  us define a map $\mu_n: \mathbb R^d\to   n \mathbb S^d\setminus \{ne_{d+1}\}$  by
\[
\mu_n(x)=\Big( \frac{4n^2 x}{|x|^2+4n^2},  \frac{n |x|^2-4n^3}{|x|^2+4n^2}\Big).
\] 
If $S_n$ is  the stereographic projection of $n \mathbb S^d\setminus \{ne_{d+1}\}$ to $\mathbb R^d\times\{-n\}$, then 
$S_n(\mu_n(x) )=(x,-n)$. 

Let $d\sigma_n$ and $dx$ denote the surface measure on $n\mathbb S^d$ and the Lebesgue measure on $\R^d$, respectively. By using rotational symmetry and  a computation it is easy to see that $\sqrt{\det((D\mu_n)^{t}D\mu_n)}=(\frac{|x|^2}{4n^2}+1)^{-d}$. Thus we have   
\begin{equation}\label{jacob}
 {d\sigma_n(x)}= \Big(\frac{|x|^2}{4n^2}+1\Big)^{-d} dx.
 \end{equation}

Let $f, g\in \mathcal S(\mathbb R^d)$ and consider the integral 
\begin{align*}
I_n &=   n^{d+1} \int_{\mathbb S^d}  \int_{\mathbb S^d}   \bZ_n( {\xi\cdot\eta}  )   f(\mu^{-1}_n (n \xi))   g(\mu^{-1}_n (n\eta)) \,d\sigma(\xi)\, d\sigma(\eta)\\
&= n^{d+1} \int_{\mathbb S^d}   H_n^d  ( f(\mu^{-1}_n (n \cdot)))  (\eta)    g(\mu^{-1}_n (n\eta)) \, d\sigma(\eta) .
\end{align*}
From \eqref{jacob}  $\|f(\mu^{-1}_n (n \cdot) )\sl p \lesssim n^{-\frac dp} \|f\|_{L^{p}(\mathbb R^d)}$ and  $\|g(\mu^{-1}_n (n \cdot) )\sl {q'} \lesssim n^{-\frac d{q'}}  \|g\|_{L^{q'}(\mathbb R^d)}$. Hence, by \eqref{assume} it is easy to see 
\begin{equation} \label{pqbound} 
| I_n|\lesssim    n^{1- d(\frac 1p-\frac 1{q})} \|H^{d}_n\|_{p,q} \|f\|_{L^{p}(\mathbb R^d)} \|g\|_{L^{q'}(\mathbb R^d)}\,\lesssim B\|f\|_{L^{p}(\mathbb R^d)} \|g\|_{L^{q'}(\mathbb R^d)}.  
\end{equation}

On the other hand,  by changes of variables, 
\begin{align*} 
I_n&= \int_{n \mathbb S^d}  \int_{n \mathbb S^d}   {n^{1-d}} \bZ_n\Big(\frac{\xi\cdot\eta}{n^2}     \Big)   f(\mu^{-1}_n \xi) g(\mu^{-1}_n \eta) \, \,d\sigma_n(\xi)\, d\sigma_n(\eta) \\ 
&= \int_{ \mathbb R^d}  \int_{\mathbb R^d}   {n^{1-d}} \bZ_n\Big(\frac{\mu_n ( x) \cdot  \mu_n (y) }{n^2}     \Big)   f(x) g(y)\, \frac{d\sigma_n}{dx}\, \frac{d\sigma_n}{dy}    \,d x\, d y.  
\end{align*}

We recall  the identity known as  Mehler-Heine type  (see Szeg\"o \cite[p.192] {Sz1}) 
\[  \lim_{n\to \infty}     n^{1-d} \bZ_n\Big(\cos\Big\{ \frac r n+ o\Big(\frac1n\Big)\Big\}\Big)= c_d\, r^{-\frac{d-2}2} J_{\frac{d-2}2}(r)\] 
for some $c_d$.  This can be easily shown by using \eqref{zkernel} and \eqref{asymp}.    Also, note that 
\[\frac{\mu_n ( x) \cdot  \mu_n (y) }{n^2}= 1-\frac{|x-y|^2}{2 n^2}+O(\frac1{n^3})= \cos\Big\{ \frac{ |x-y| }n+ o\Big(\frac1n\Big)\Big\}.\] 
Hence, it follows that 
\[ \lim_{n\to \infty}{n^{1-d}} \bZ_n\Big(\frac{\mu_n ( x) \cdot  \mu_n (y) }{n^2}     \Big)=  c_d |x-y|^{-\frac{d-2}2} J_{\frac{d-2}2}(|x-y|)= \widetilde c_d    \int_{\mathbb S^{d-1}}  e^{i(x-y)\cdot \eta} d\sigma(\eta)  .\] 
Therefore, recalling $\lim_{n\to\infty} \frac{d\sigma_n}{dx}=1$ from \eqref{jacob},  we get 
\[ \lim_{n\to \infty}         I_n= \widetilde c_d \int_{\mathbb R^d} \int_{\mathbb S^{d-1}} e^{i y\cdot\eta}  \widehat f(\eta) \, d\sigma(\eta)\, g(y) \, dy .  \] 
Combining this with \eqref{pqbound} and duality  yield 
\[  \Big| \widetilde c_d \int_{\mathbb R^d} \int_{\mathbb S^{d-1}} e^{i y\cdot\eta}  \widehat f(\eta) \, d\sigma(\eta)\, g(y) \, dy  \Big| \lesssim  \|f\|_{L^{p}(\mathbb R^d)} \|g\|_{L^{q'}(\mathbb R^d)}\, . \]
Duality gives \eqref{rest-ext}. 
\end{proof}

\section{Application to Carleman estimate}  \label{appcar}

In this section we prove Theorem \ref{carleman}. For this it is sufficient to show the following since \eqref{proj} holds as long as $p,q$ satisfy \eqref{scaling} and $ \frac{2(d-1)d}{d^2+2d-4} < p < \frac{2(d-1)}{d}$  by Theorem \ref{k-proj}.

\begin{prop}\label{suf}Let $p,q \in (1,\infty)$ and satisfy \eqref{scaling}. Suppose  that the estimate 
\begin{equation}\label{proj}
\| H_n^{d-1} f\|_{L^q(\mathbb S^{d-1})}\lesssim n^{1-\frac2d} \| f\|_{L^p(\mathbb S^{d-1})}
\end{equation}
holds. Then, for the same $p,q$,  \eqref{weight} holds whenever $\dist(\tau, \mathbb Z+\frac d{q})\ge c$ for some $c>0$. 
\end{prop}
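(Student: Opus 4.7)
The plan follows Jerison's method \cite{Je}: transport the weighted estimate on $\mathbb R^d$ to a translation-invariant estimate on the cylinder $\mathbb R\times\mathbb S^{d-1}$, diagonalize in the spherical variable, and then use the projection bound \eqref{proj} to control the inverse operator. With polar-log coordinates $x=e^t\omega$, set
\[
W(t,\omega):=e^{dt/q}|x|^{-\tau}u(e^t\omega),\qquad F(t,\omega):=e^{dt/p}|x|^{-\tau}\Delta u(e^t\omega).
\]
The scaling identity $1/p-1/q=2/d$ forces the $L^q$, resp.\ $L^p$, norm on $\mathbb R^d$ to coincide with the cylindrical $L^q$, resp.\ $L^p$, norm of $W$, resp.\ $F$. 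Using $\Delta=e^{-2t}(\partial_t^2+(d-2)\partial_t+\Delta_{\mathbb S^{d-1}})$ and the conjugation $u=|x|^{\tau}(|x|^{-\tau}u)$, a direct computation gives $F=LW$ with
\[
L=(\partial_t+\beta)(\partial_t+\beta+d-2)+\Delta_{\mathbb S^{d-1}},\qquad \beta:=\tau-d/q,
\]
so \eqref{weight} is equivalent to $\|W\|_{L^q(\mathbb R\times\mathbb S^{d-1})}\le C\|LW\|_{L^p(\mathbb R\times\mathbb S^{d-1})}$.

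Next, decompose $W=\sum_n H_n^{d-1}W$ in the $\omega$-variable. Since $-\Delta_{\mathbb S^{d-1}}=n(n+d-2)$ on $\mathcal H_n^{d-1}$, the operator $L$ restricts on the $n$-th mode to $L_n=(\partial_t+a_n)(\partial_t+b_n)$ with $a_n=\beta+n+d-2$ and $b_n=\beta-n$. The hypothesis $\dist(\tau,\mathbb Z+d/q)\ge c$ is precisely $\dist(\beta,\mathbb Z)\ge c$, which yields $|a_n|,|b_n|\ge c$ uniformly in $n\ge 0$. Hence the Fourier multiplier $[(i\xi+a_n)(i\xi+b_n)]^{-1}$ is uniformly bounded, and the Green's function
\[
G_n(t)=\frac{1}{2n+d-2}\bigl(k_{b_n}(t)-k_{a_n}(t)\bigr),\qquad k_c(t):=\mathrm{sgn}(c)\,e^{-ct}\chi_{\mathrm{sgn}(c)\,t>0},
\]
is well defined. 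Setting $\mathbf K(t):=\sum_n G_n(t)H_n^{d-1}$ gives $W=\mathbf K\ast_t F$, reducing the proof to bounding the operator $F\mapsto\mathbf K\ast_t F$ from $L^p(\mathbb R\times\mathbb S^{d-1})$ to $L^q(\mathbb R\times\mathbb S^{d-1})$ uniformly in $\tau$.

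The crux is an operator-valued kernel bound of the form $\|\mathbf K(t)\|_{L^p(\mathbb S^{d-1})\to L^q(\mathbb S^{d-1})}\lesssim |t|^{-(d-2)/d}$ near $t=0$ with exponential decay at infinity; applied via the Hardy--Littlewood--Sobolev inequality in $t$, whose critical Young exponent $r=d/(d-2)$ is dictated precisely by $1/p-1/q=2/d$, this yields the desired $L^p\to L^q$ estimate on the cylinder. The projection bound $\|H_n^{d-1}\|_{p\to q}\lesssim n^{1-2/d}$ from \eqref{proj} controls the ``large-mode'' part $n\gtrsim|\beta|$ of $\mathbf K(t)$ through a direct triangle-inequality summation, producing the $|t|^{-(d-2)/d}$ singularity. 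The ``small-mode'' part $n\lesssim|\beta|$, which is a finite sum whose exponents $a_n,b_n$ stay at distance at least $c$ from zero, is the delicate piece; controlling it uniformly in $|\beta|$ is the main obstacle, since the exponent $r=d/(d-2)$ is borderline and the naive triangle inequality in $L^q$ places $\|\mathbf K(t)\|_{L^p\to L^q}$ only in weak-$L^r$. The resolution is to combine the orthogonality of the $H_n^{d-1}$ at the $L^2$ level with the $L^p$-$L^q$ bound \eqref{proj} via real interpolation, using the spectral gap supplied by the distance condition; pursuing the argument in Lorentz spaces yields the Stein-type refinement $\||x|^{-\tau}u\|_{L^{q,p}}\lesssim \||x|^{-\tau}\Delta u\|_{L^p}$ noted in the introduction.
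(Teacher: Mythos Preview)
Your reduction to the cylinder, the factorization $L_n=(\partial_t+a_n)(\partial_t+b_n)$, the Green's function $G_n$, and your handling of the large-mode sum via the projection bound are all exactly what the paper does.

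The gap is in the last paragraph. You flag the small-mode piece $n\lesssim|\beta|$ as ``the main obstacle'' because the triangle-inequality bound on the scalar kernel lands only in weak-$L^{d/(d-2)}$, and you then propose to resolve this by combining $L^2$-orthogonality with \eqref{proj} via real interpolation in Lorentz spaces. That resolution is both unnecessary and, in the stated generality of Proposition~\ref{suf}, unavailable: the hypothesis does not assume $p\le 2\le q$, so there is no $L^2$-orthogonality across modes to lean on.

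In fact there is no obstacle at all. The paper treats the small-mode sum by the \emph{same} direct computation you used for the large modes. After Minkowski and \eqref{proj}, the scalar kernel controlling $I_2$ is
\[
\sum_{1\le n<\widetilde\tau} n^{-2/d}\,e^{-(\widetilde\tau-n)|t-s|},
\]
and an elementary estimate (write it as $e^{-\widetilde\tau|t-s|}\sum n^{-2/d}e^{n|t-s|}$, compare to an integral, and use $\widetilde\tau-[\widetilde\tau]\ge c$) shows this is $\lesssim e^{-\tilde c|t-s|}\,|t-s|^{2/d-1}+e^{-\tilde c|t-s|}$ \emph{uniformly} in $\widetilde\tau$. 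This kernel is pointwise dominated by a constant multiple of $|t-s|^{-(d-2)/d}$, and the one-dimensional Hardy--Littlewood--Sobolev inequality is precisely the statement that convolution with $|t|^{-(d-2)/d}$ maps $L^p(\mathbb R)\to L^q(\mathbb R)$ for $1<p<q<\infty$ with $1/p-1/q=2/d$. So the ``borderline'' weak-$L^r$ kernel bound you obtained is already sufficient; HLS closes the argument without any appeal to orthogonality or interpolation. The Lorentz refinement $L^{q,p}$ you mention is a consequence one extracts afterwards, not an input to the proof.
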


\newcommand{\ptr}{\partial_r}
\newcommand{\Norm}{ \Big\|} 

\begin{proof}[Proof of Proposition \ref{suf}] We follow  Jerison's idea in \cite{Je}. %Once we have \eqref{proj} the rest of argument is rather standard. 

First, using the spherical coordinates $(r,\omega)\in \mathbb R_+\times \mathbb S^{d-1}$ and   the identity $\Delta=\partial_r^2+\frac{d-1}r \partial_r+ \frac{1}{r^2}\Delta_{\mathbb S^{d-1}},$
we  note that 
\[ |x|^{-\tau} \Delta\, |x|^{\tau}= \partial_r^2 +\frac{2\tau+d-1}r \partial_r +\frac{\tau^2+(d-2)\tau}{r^2}  + \frac{1}{r^2}\Delta_{\mathbb S^{d-1}}.\]
Making the change of variables $ r=e^{-t}$ gives $|x|^{-\tau} \Delta\, |x|^{\tau}= e^{2t} ( P(\partial_t)+ \Delta_{\mathbb S^{d-1}})$, where we set  \[P(t)= t^2 -(2\tau+d-2) t + \tau(\tau+d-2). \]  
Hence, we see that  \eqref{weight} is equivalent to the estimate 
\[
 \Big( \int_{-\infty}^\infty \int_{\mathbb S^{d-1}} |u(t, \omega)|^q d\omega\, e^{-d\,t}dt\Big)^\frac1q  \lesssim 
 \Big( \int_{-\infty}^\infty \int_{\mathbb S^{d-1}} \big|( P(\partial_t)+ \Delta_{\mathbb S^{d-1}})u(t,\omega)\big|^p d\omega\, e^{(2 p-d)t} dt\Big)^\frac1p 
\]
for $u\in C^\infty_c((\R\setminus\{0\}) \times \mathbb S^{d-1})$.

By replacing $ u\to e^{\frac dq t} u $ and then using the relations $\p_t^m (e^{\frac dq t} u) = e^{\frac dq t}(\p_t+\frac dq)^mu $ and $\frac dp-\frac dq=2$, it follows that the previous inequality is equivalent to
\begin{equation}\label{equiva}
 \Big( \int_{-\infty}^\infty \int_{\mathbb S^{d-1}} |u(t, \omega)|^q d\omega\, dt\Big)^\frac1q  \lesssim  
\Big( \int_{-\infty}^\infty \int_{\mathbb S^{d-1}} \big|( P\big(\partial_t+\frac dq\big)  + \Delta_{\mathbb S^{d-1}})u(t, \omega)\big|^p d\omega\,  dt\Big)^\frac1p. 
\end{equation}
Thus we are reduced to showing  
\begin{equation}\label{equiv}
\Big( \int_{-\infty}^\infty  \int_{\mathbb S^{d-1}} \big|\big[P\big(\partial_t+\frac dq\big) + \Delta_{\mathbb S^{d-1}}\big]^{-1}u(t, \omega)\big|^q d\omega\, dt\Big)^\frac1q   \lesssim   \Big( \int_{-\infty}^\infty \int_{\mathbb S^{d-1}} \big|u(t, \omega)\big|^p d\omega\,  dt\Big)^\frac1p. 
\end{equation}

The operator $[P\big(\partial_t+\frac dq\big)  + \Delta_{\mathbb S^{d-1}}]^{-1}$ can be expressed in terms of spherical harmonic projection operators. In fact, since   $\Delta_{\mathbb S^{d-1}}H_n^{d-1}=-n(n+d-2)H_n^{d-1}$  and  $ P\big(\partial_t+\frac dq\big)  -n(n+d-2) = (\partial_t+\frac dq-( \tau -n)) (\partial_t+\frac dq -(\tau +n+d-2)) $, it follows that  
\begin{equation}\label{spectral} 
 [P\big(\partial_t+\frac dq\big) + \Delta_{\mathbb S^{d-1}}]H_n^{d-1}= (\partial_t-\widetilde \tau+n) (\partial_t-\widetilde\tau -n-d+2) H_n^{d-1}, 
\end{equation}
where we set $\widetilde \tau=\tau - \frac dq $. Hence, taking Fourier transform in $t$,  we have that  
\begin{align} \label{second-express}
& [ P\big(\partial_t + \frac dq\big)+ \Delta_{\mathbb S^{d-1}}]^{-1} H_n^{d-1} u(t,\omega)
= \frac1{2\pi}   \int \frac{e^{i ts} \mathcal F_t (H_n^{d-1} u(\cdot, \omega))(s) \,  ds }{  (is-\widetilde \tau+n) (is-\widetilde\tau-n-d+2)  } \\
=  & \frac1{2\pi (2n+d-2)}   \int   e^{i ts} \Big( \frac1{ is-\widetilde\tau -n-d+2 } - \frac{1}{is-\widetilde \tau +n} \Big)\mathcal F_t (H_n^{d-1} u(\cdot, \omega))(s) \  ds \nonumber
 \end{align} 
if $2n\neq 2-d$, which is always true because $d\ge3$ and $n\ge 0$.  Here $\mathcal F_t$   denotes the Fourier transform with respect to  $t$ variable. 

\begin{lem}    Let $\alpha\neq0$. Then, for $g\in \mathcal S(\mathbb R)$, 
\[   \frac1{2\pi } \int  \frac{e^{its}} {is+\alpha} \mathcal F_t g(s) ds  = \begin{cases}  \int_{-\infty}^t  e^{-(t-s)\alpha} g( s) ds,  &  \alpha>0 \\  - \int_t^\infty e^{-(t-s)\alpha} g( s) ds,  &  \alpha<0 . \end{cases} \] 
\end{lem}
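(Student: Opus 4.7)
The plan is to reduce this to a standard convolution/ODE identity via Fourier inversion. Writing $u(t)$ for the right-hand side in either case, I would observe that $u$ is the unique bounded solution of the first-order ODE
\[ (\p_t + \al)u = g \]
that tends to zero at $\pm\infty$, with the choice between the two formulas dictated by which direction of integration yields a convergent, decaying solution (forward convolution when $\al>0$, backward when $\al<0$). For Schwartz $g$ this is an elementary verification: differentiate under the integral sign and use $g \in \cS(\R)$ to see that $u$ is smooth, bounded, and in fact decays at infinity.

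Second, taking the Fourier transform in $t$ of $(\p_t+\al)u=g$ gives $(is+\al)\ft_t u(s)=\ft_t g(s)$, so
\[ \ft_t u(s)=\frac{\ft_t g(s)}{is+\al}, \]
and Fourier inversion yields exactly the left-hand side of the claimed identity. Equivalently, one can proceed by the convolution theorem: the right-hand side is $g\ast h_\al$ with $h_\al(t)=e^{-\al t}\mathbf 1_{[0,\I)}(t)$ when $\al>0$ and $h_\al(t)=-e^{-\al t}\mathbf 1_{(-\I,0]}(t)$ when $\al<0$, and a one-line direct computation shows $\ft_t h_\al(s)=1/(is+\al)$ in both cases (the sign restriction on $\al$ is exactly what makes the defining integral for $\ft_t h_\al$ converge).

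The only real technical point is that $1/(is+\al)$ is not in $L^1(\R)$, so Fourier inversion of the product $\ft_t g(s)/(is+\al)$ is not literally absolutely convergent. This is not a serious obstacle: since $g\in\cS(\R)$ we have $\ft_t g\in \cS(\R)$, and $\ft_t g(s)/(is+\al)\in L^1\cap L^2$, so standard Plancherel/inversion in $L^2$ applies. Alternatively, one can justify the inversion by contour integration, closing in the upper (resp. lower) half-plane according to the sign of $t-s$ and picking up the residue at the pole $s=i\al$ (resp. $s=i\al$ from below), which reproduces the exponential kernel directly. Either justification is routine, so I would simply invoke Fourier inversion on $\cS(\R)$ after the ODE identification and state the two cases in parallel.
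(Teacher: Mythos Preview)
Your proposal is correct and amounts to the same computation the paper performs: the paper reduces the lemma to showing that the inverse Fourier transform of $s\mapsto 1/(is+\alpha)$ is $e^{-\alpha t}\mathbf 1_{(0,\infty)}(t)$ for $\alpha>0$ (and $-e^{-\alpha t}\mathbf 1_{(-\infty,0)}(t)$ for $\alpha<0$), which it verifies by the residue theorem, and then convolves with $g$. Your convolution/ODE framing is an equivalent route to the same kernel identity --- indeed your direct computation of $\mathcal F_t h_\alpha$ is arguably more elementary than the contour argument --- so there is no substantive difference.
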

 
For the proof of this lemma it is enough to show that 
\[ \frac1{2\pi } \int  \frac{e^{its}} {is+\alpha} ds = \begin{cases} e^{-t\alpha}\mathbf 1_{(0,\infty)}( t)  ,  &  \alpha>0 \\   -e^{-t\alpha}\mathbf 1_{(-\infty, 0)}( t) ,  &  \alpha<0 \end{cases} \]
and this follows from an easy application of the residue theorem to the function $e^z/z$.
 
After applying spectral projection $u(t,w)=\sum_n H_n^{d-1}  u(t,\om)$, we see that the inverse of  $P\big(\partial_t+\frac dq\big)+ \Delta_{\mathbb S^{d-1}}$ is given as follows: 
\begin{equation}\label{inversex}
 [ P\big(\partial_t + \frac dq\big)+ \Delta_{\mathbb S^{d-1}}]^{-1} u(t,\omega) =\sum_{k=0} ^4  I_k u (t,\om), 
\end{equation}
where 
\begin{align*}
I_0 u(t,\om)&= \frac{1}{2\pi(d-2)} \int \frac{e^{its}\F_t( H_0^{d-1}u(s,\cdot))(s) }{(is-\ti\ta)(is-\ti\ta-d+2)}  ds, \\
I_1 u(t,\om)&= \sum_{ n > \widetilde \tau , \, n\neq 0}  \frac{-1}{ 2n+d-2} \int_{-\infty}^t e^{  -(n-\widetilde \tau) (t-s) } H_n^{d-1}u(s,\om) ds, \\
I_2 u(t,\om)&=\sum_{  0< n < \widetilde \tau }  \frac 1{2n+d-2} \int^{\infty}_t e^{  -(n- \widetilde \tau)  (t-s) } H_n^{d-1}u(s,\om) ds, \\
I_3 u(t,\om)&=\sum_{ n< -\widetilde \tau-d+2 , \, n\neq 0 }  \frac1{ 2n+d-2 } \int_{-\infty}^t  e^{ (\widetilde\tau+n+d-2)(t-s) } H_n^{d-1}u(s,\om) ds, \\
I_4 u(t,\om)&= \sum_{ n> -\widetilde \tau-d+2 ,\, n\neq 0 }  \frac {-1}{2n+d-2}  \int_t^\infty e^{ (\widetilde\tau+n+d-2) (t-s) } H_n^{d-1}u(s,\om) ds .
\end{align*} 
We see that $I_0u$ is trivially bounded from $L^p$ to $L^q$. Therefore, we are reduced to showing that, for $k=1,2,3,4$, 
\begin{equation}\label{inverse} 
\|I_k u\|_{L^q(dtd\omega)}\lesssim \|u\|_{L^p(dtd\omega)} 
\end{equation}  
with the implicit constant independent of $\tau$. We shall only prove \eqref{inverse} for $k=1,2$, since the others can be handled similarly.

{\it Proof of \eqref{inverse} for $k=1$.} We now use the spectral projection estimate \eqref{proj}, which is followed by Minkowski's inequality,  to get 
\begin{align*}
\|   I_1 u (t,\cdot)\|_{L^q(\mathbb S^{d-1})}  \lesssim  \int_{-\infty}^t \sum_{n>\widetilde \tau}  n^{-\frac 2d}   e^{-(n-\widetilde \tau) (t-s) }  \|u(s,\cdot)\|_{L^p(\mathbb S^{d-1})}  ds. 
\end{align*} 
Since $n-\wt \ta \ge c:=\dist(\tau, \mathbb Z+\frac d{q})>0$, whenever $s>0$, it is clear that $\sum_{n>\widetilde \tau}  n^{-\frac 2d}   e^{-(n-\widetilde \tau) s} $ is bounded by
\begin{align*}
\sum_{n>\widetilde \tau}  (n-\wt \ta)^{-\frac 2d}   e^{-(n-\widetilde \tau) s} &\le \sum_{j=0}^\infty (c+j)^{-\frac 2d} e^{-(c+j)s} \le c^{-\frac 2q} e^{-cs} + \int_0^\infty (c+u)^{-\frac 2d} e^{-(c+u)s}du \\
&\le e^{-cs}\Big( c^{-\frac 2q} + s^{\frac 2d-1}\Gamma (1-\frac 2d)\Big) \lesssim e^{-\tilde c s} s^{\frac 2d -1}
\end{align*} 
for some $\tilde c >0$. In fact, we can take any $\tilde c <c$. Therefore, by  Hardy-Littlewood-Sobolev inequality or Young's inequality, it follows that 
\begin{align*}
\|   I_1 u \|_{L^{\tilde q}(\R; L^q(\mathbb S^{d-1}))} & \lesssim \Big\|  \int_{-\infty}^t  (t-s)^{-\frac{d-2}{2}(\frac1p-\frac1q)} e^{-\tilde c(t-s)}  \|u(s,\cdot)\|_{L^p(dw)}  ds \Big\|_{L^{\tilde q}(\R, dt)} \\
& \lesssim \|u\|_{L^{\tilde p}(\R ; L^p(\mathbb S^{d-1}))} 
\end{align*} 
provided that  $1< \tilde p\le \tilde q< \infty$ and $   \frac1{\tilde p}-\frac1{\tilde q}\le \frac 2d $. In particular, taking   $\tilde p=p,\, \tilde q=q$, we get the desired estimate \eqref{inverse} for $k=1$. 
  
{\it Proof of \eqref{inverse} for $k=2$.} In this case we are assuming that $\ti\ta>1$, because otherwise the summation is empty. As before Minkowski's inequality and the spectral projection estimate \eqref{proj} give us
\[ \|   I_2 u (t,\cdot)\|_{L^q(\mathbb S^{d-1})}  \lesssim  \int^\infty_t \sum_{n=1}^{[\ti\tau]}  n^{-\frac 2d}   e^{(\widetilde \tau -n) (t-s)}  \|u(s,\cdot)\|_{L^p(\mathbb S^{d-1})}  ds, \]
where $[\ti\ta]$ is the largest integer less than $\ti\ta$. We note that $\ti\ta-[\ti\ta]\ge\dist(\ta, \Z+\frac dq)=c>0$.  To estimate the kernel let us fix $s<0$. Then an elementary computation shows that 
\begin{align}
\sum_{n=1}^{[\ti\ta]} n^{-\frac 2d} e^{(\ti\ta-n)s} &\le e^{-|s|\ti\ta} \Big( \int_1^{[\ti\ta]}(u-1)^{-\frac2d} e^{|s|u}du+e^{|s|[\ti\ta]}\Big) \nonumber \\
&= e^{-|s|\ti\ta} \Big(|s|^{\frac 2d-1}e^{|s|} \int_0^{|s|([\ti\ta]-1)}u^{-\frac2d} e^{u}du+e^{|s|[\ti\ta]}\Big). \label{ker}
\end{align}
If $1\le\ti\ta<2$, then \eqref{ker} is bounded by $e^{-|s|(\ti\ta-[\ti\ta])}\le e^{-c|s|}$, which is in $L^r((-\infty,0), ds)$ for $1\le r\le \infty$. If $\ti\ta\ge2$, then the integral in \eqref{ker} is bounded by
\begin{align*}
\int_0^{1} u^{-\frac 2d} e^u du  + {\mathbf 1}_{(\frac1{[\ti\ta]-1}, \infty)}(|s|) \int_{1}^{|s|([\ti\ta]-1)} u^{-\frac 2d} e^u du \lesssim 1+ {\mathbf 1}_{(1/[\ti\ta], \infty)}(|s|) e^{|s|([\ti\ta]-1)}
\end{align*}
with the implicit constant depends only on $c$ and $d$. Hence the quantity \eqref{ker} is bounded by
\[e^{-|s|(\ti\ta-1)}|s|^{\frac 2d-1} +  {\mathbf 1}_{(1/[\ti\ta], \infty)}(|s|) e^{-|s|(\ti\ta-[\ti\ta])} |s|^{\frac 2d-1} + e^{-|s|(\ti\ta-[\ti\ta])} \lesssim e^{-c|s|}|s|^{\frac 2d-1} + e^{-c|s|}.\]
Therefore as in the proof for $I_1$, we conclude that \eqref{inverse} is true for $k=2$. This completes the proof.
\end{proof}

\begin{rem}[Failure of \eqref{weight}]\label{fail}   We show \eqref{weight} does not hold if 
\begin{equation} 
\frac 1q< \frac{d-4}{2(d-1)}, \quad   \frac1p>   \frac{d+2}{2(d-1)}. 
\label{nec-carleman} 
\end{equation} 
We only need to consider $p,q$ satisfying \eqref{scaling} because \eqref{weight} implies the condition \eqref{scaling}. Let $n\gg 1$ and choose $\tau\notin\Z+\frac dq$ so that $\widetilde\tau=\tau-\frac dq=n+\frac12$. Let $h$ be a nontrivial smooth positive function supported in $[1/2,2]$ and $g$ be a spherical harmonic polynomial on $\mathbb S^{d-1}$ of degree $n$. Let us consider 
\[u(t,w)=   h(t)  g(w). \] 
By \eqref{spectral}  we have $\big( P(\partial_t + \frac dq\big)+ \Delta_{S^{d-1}}) u = O\big(n(|h(t)|+|h'(t)|+|h''(t)|) g(w)\big).$ Since \eqref{weight} and \eqref{equiva} are equivalent if \eqref{scaling} is satisfied, we apply \eqref{equiva} to the function  $u$. Hence, from integration in $t$  we have that 
\[   \| H_n^{d-1}\|_{p,q} \lesssim n.\] 
whenever \eqref{weight} holds.  However,   by Theorem \ref{nec}  such bound is possible only if \eqref{nec-carleman} is satisfied  when  $p,q$ satisfy \eqref{scaling}.  
\end{rem}

{\bibliographystyle{plain}

\end{document}